\documentclass[twoside,12pt,reqno]{amsart}
\usepackage{latexsym}
\usepackage{amssymb, cases}
\usepackage{epsfig}
\usepackage{multirow}
\usepackage{caption}
\usepackage{subcaption}
\usepackage{graphicx}
\usepackage{color,verbatim}

\textwidth=16cm \oddsidemargin=.5cm \evensidemargin=.5cm
\textheight=22.5cm \topmargin=-0.2cm

\newtheorem{thm}{Theorem}[section]

\newtheorem{lem}[thm]{Lemma}

\theoremstyle{definition}

\theoremstyle{remark}
\newtheorem{rem}{Remark}[section]

\makeatletter
\def\serieslogo@{}
\makeatother \makeatletter
\def\@setcopyright{}
\makeatother

\begin{document}
\title[Positivity-preserving and energy-satisfying DG methods]{Positivity-preserving and energy-dissipating discontinuous Galerkin methods for 
nonlinear nonlocal Fokker-Planck equations}

\author[]{Jos\'{e} A. Carrillo$^1$, Hailiang Liu$^2$, and Hui Yu$^3$}
\address{$^1$ Mathematical Institute, University of Oxford, Oxford, OX2 6GG, United Kingdom.}
\email{carrillo@maths.ox.ac.uk}
\address{$^2$Iowa State University, Mathematics Department, Ames, IA 50011, United States.} \email{hliu@iastate.edu}
\address{$^3$ School of Mathematics and Computational Science, Xiangtan University, Xiangtan, 411105, China.}
\email{huiyu@xtu.edu.cn}
\subjclass{35K20, 65M12, 65M60}
\keywords{Discontinuous Galerkin methods, Fokker-Planck equations, energy dissipation, positivity-preservation.}

\begin{abstract}
This paper is concerned with structure-preserving numerical approximations for a class of nonlinear nonlocal Fokker-Planck equations, which admit a gradient flow structure and find application in diverse contexts.   The solutions,  representing density distributions, must  be non-negative and satisfy a specific energy dissipation law.   
We design an arbitrary high-order discontinuous Galerkin (DG) method  tailored for these model problems. Both semi-discrete and fully discrete schemes are shown to admit the energy dissipation law for non-negative numerical solutions. To ensure the preservation of positivity in cell averages at all time steps, we introduce a local flux correction applied to the DDG diffusive flux. 
Subsequently, a hybrid algorithm is presented, utilizing a positivity-preserving limiter, to generate positive and energy-dissipating solutions. Numerical examples are provided to showcase the high resolution of the numerical solutions and the verified properties of the DG schemes.
\end{abstract}

\maketitle

\section{Introduction}
In this paper,  we develop an arbitrary high-order discontinuous Galerkin (DG) method with positivity preservation and energy dissipation for the following problem
\begin{subequations}\label{IVP}
\begin{numcases}{}
\partial_t \rho  =\nabla_x\cdot [\rho \nabla_x (V(x) + H'(\rho) + W*\rho) ],\quad x\in \Omega\subset \mathbb{R}^d,\; t >0, \\
\rho(0, x)  = \rho_0(x),
\end{numcases}
\end{subequations}
subject to appropriate boundary conditions. Here 
$\Omega$ is a bounded domain in the $d$-dimensional space,  
$ \rho(t, x)\geq 0$  is the unknown probability density,  
$H(\rho)$ is the density of internal energy, 
$W(x)$ is the so-called interaction potential,  and $V(x)$ is the confinement potential; see \cite{Vi03, CMV03}.  $H$ is a convex function so that $\nabla\cdot [\rho H''(\rho)\nabla \rho]$ is a density-dependent diffusion. The drift term $\nabla\cdot[\rho \nabla V]$ and the interaction term  $\nabla\cdot[\rho \nabla (W*\rho)]$ are respectively induced by external forces and interaction forces, determined by a smooth potential $W(x)=W(-x)$ (symmetric).  

Equation (\ref{IVP}a) often referred to as nonlinear Fokker-Planck equations has appeared in various applications such as in cell migration and
chemotaxis \cite{Ke70}, collective motion of animals (swarming) \cite{KCB13}, self-assembly of nanoparticles \cite{HP06}, and biological channels \cite{LHMZ10}, among others. When both $W$ and $V$ vanish, it reduces to the classical heat or porous medium equations with 
$H(\rho) = \rho {\rm log} \rho -\rho$ or
$H(\rho) = \rho^m (m>1)$, respectively \cite{CT00, Ot01, Va06}. In the presence of a nonlocal Newtonian interaction kernel $W$, the equation models a chemotaxis system, for which the Keller–Segel model in its classical version \cite{Ke70} or with nonlinear diffusions \cite{CC06,BCL09} is widely known. If the nonlocal interaction kernel $W$ is induced from the Poisson equation, the equation with $H(\rho) = \rho {\rm log} \rho -\rho$  is related to the Poisson–Nernst–Planck system \cite{LHMZ10}. 

This nonlinear Fokker-Planck equation has a gradient flow structure,  as discovered in \cite{CMV03}, of the free energy 
\begin{equation}\label{E_t}
E[\rho]=\int_{\Omega} (\rho V(x) +H(\rho))\,dx +\frac{1}{2} \int_\Omega \int_\Omega W(x-y)\rho(x)\rho(y)\,dxdy.
\end{equation}
A simple calculation shows that the evolution of this free-energy functional along a solution of \eqref{IVP} with zero flux boundary conditions is given by 
\begin{equation}\label{dEdt}
    \frac{d}{dt} E[\rho](t) =-\int_{\Omega} \rho |\nabla_x(V+H'(\rho) +W*\rho)|^2\,dx \leq 0,
\end{equation}
which is referred to as the energy dissipation property of the underlying system. 
Another two companion properties include solution positivity (non-negativity, more precisely) and mass conservation, i.e.,
\begin{equation}\label{positivity}
    \rho_0(x)\geq 0  \Longrightarrow \rho(t, x)\geq 0 \quad \text{ for }  t>0,
\end{equation}
\begin{equation}\label{mass}
    \int_{\Omega}\rho(t, x)dx=\int_{\Omega}\rho_0(x)dx \quad \text{ for }  t>0.
\end{equation}
In order to capture the rich dynamics of solutions to (\ref{IVP}), it is highly desirable to develop high-order schemes which can preserve the energy dissipation law (\ref{dEdt}), solution positivity (\ref{positivity}), and mass conservation (\ref{mass})
at the discrete level. 

The design of structure-preserving schemes for this model, and  other models of similar nature, has gained increasing attention in recent years.  
In \cite{LY12}, first order implicit numerical schemes were developed for linear (yet singular) Fokker-Planck equations, ensuring the satisfaction of all three solution properties. This approach was subsequently extended in \cite{LW14}  to a system of Poisson-Nernst-Planck equations. The preservation of three solution properties is ensured if $\tau=\mathcal{O}(h^2)$, where $\tau$ represents the time step, and $h$ is the spatial mesh size. Notably, such time step restriction has been eliminated through implicit-explicit time discretization in \cite{LM20b,LM21}, while still maintaining all three solution properties. In \cite{LM20a}, the authors designed and analyzed second order (in both space and time) positivity-preserving and free energy dissipating schemes for (\ref{IVP}),  with a mild time step restriction of size $\mathcal{O}(1)$. A comparable  finite difference scheme in the multi-dimensional case was studied in \cite{QWZ2019}. Proposals for a finite volume method for \eqref{IVP} without the interaction kernel $W$ were introduced in \cite{BF2012}, further generalized in \cite{CCH14} to cover scenarios with nonlocal interaction. Positivity is enforced using piecewise linear polynomials interpolating interface values under a CFL condition $\tau=O(h^2)$. Extensions to equations and systems with implicit time discretization and saturation terms can be found in \cite{BCH2020,BCH2023},  where the three major solution properties persist at the discrete level. Various numerical methods, including the energetic variational approach presented in \cite{DCLYZ2021}, have been introduced to preserve the three major solution properties in this context.  

It has been a challenging task to design higher-order schemes (beyond  second order) while preserving all three solution properties for \eqref{IVP}. In addressing this challenge, discontinuous Galerkin (DG) methods have captured increasing attention. The DG method belongs to  the class of finite element methods, using a completely discontinuous piecewise polynomial space for both the numerical solution and test functions. 
A key advantage of the DG method lies in its flexibility, achieved through the use of local approximation spaces
and the thoughtful design of numerical fluxes across computational cell interfaces. This flexibility enables the method to effectively capture and preserve the solution properties of the given equation. For a more 
comprehensive understanding of DG methods applied to  elliptic, parabolic, and hyperbolic PDEs,  
additional information about can be found in relevant literature such as books and lecture notes, as referenced in \cite{HW07, Ri08, Sh09}.
 
The DG discretization employed in this study draws inspiration from  the direct DG (DDG) method, as  proposed in \cite{LY09, LY10}. A distinguishing feature of this approach is the selection of numerical fluxes for the solution gradient, incorporating higher-order derivatives evaluated across cell interfaces. This characteristic proves particularly advantageous in preserving global solution properties, such as energy dissipation.  Within the DDG framework, ongoing research focus on exploring schemes with provable local or pointwise solution  properties, including positivity preservation.  A powerful strategy involves the use of a suitable reformulation of the underlying PDE model. Relevant works in this domain include \cite{LY14, LY15, LW16, LW17, SCS18}. In \cite{LY15}, an approach is applied to  the linear Fokker-Planck equation:
\begin{equation}\label{fplinear+}
\partial_t \rho =\nabla_x\cdot (\nabla_x \rho  + \rho \nabla_x V).
\end{equation}
This equation corresponds to (\ref{IVP}) with $H(\rho)=\rho \log \rho$ and $W=0$. The authors developed entropy-satisfying DDG schemes of arbitrary high order for (\ref{fplinear+}),  based on the non-logarithmic Landau formulation:
$$
\partial_t \rho =\nabla_x\cdot \left[M\nabla_x \left(\frac{\rho}{M}\right)\right] \quad \text{ with } M(x)=e^{-V(x)}.
$$
This formulation ensures satisfaction of the quadratic entropy dissipation law.  The higher-order method in \cite{LY15} extends and improves upon the finite volume method introduced in \cite{LY12}. Additionally, based on this reformulation, a third-order maximum-principle-preserving DG scheme was developed in \cite{LY14}. 
These third-order DDG schemes have been further extended to solve convection-diffusion equations with anisotropic diffusivity in \cite{YL19}, where a main difficulty stems from the anisotropic diffusion. An interesting extension to the Poisson-Nernst-Planck equations was explored in \cite{LWYY22}. Notably, the results based on this  reformulation appear to be restricted to spatial accuracy
up to the third order.  

Employing a different reformulation in terms of the energy flux, the authors in \cite{LW17} designed free energy satisfying DG schemes for Poisson-Nernst-Planck equations at any high order. 
While solution positivity is maintained through the application of a positivity-preserving limiter; unfortunately the assurance of positive cell averages,  essentially required for the limiter,  has only been proved  for specific cases thus far. 

In \cite{SCS18}, through a local reformulation of \eqref{IVP} as: 
 \begin{align*}
    \left\{\begin{array}{cl}
q & =V(x)+H'(\rho) +W*\rho,\\
\xi & =\partial_x q,\\
\partial_t \rho & =\partial_x (\rho \xi), 
    \end{array}\right.
\end{align*}
a high order nodal local DG (LDG) method was constructed using $k+1$ Gauss--Lobatto quadrature points for degree $k$ polynomials. This method aimed to preserve both energy dissipation and solution positivity, aided by a positivity-preserving limiter.  However, in some test cases, a deterioration in accuracy was observed. 

In this paper, we adopt the concept of weak positivity developed in \cite{ZS10,ZXS12, Zh17}.  This strategy requires to ensure the positivity of cell averages in the context of the forward Euler time discretization. A simple efficient scaling limiter can then be used to modify negative point values to nonnegative ones without changing cell averages. 

To address an outstanding issue from \cite{LW17}, we propose a local flux correction applied to the DDG diffusive flux. This resolution enables the energy-satisfying DG (ESDG) method to attain arbitrarily high  order. Specifically, the discrete level adheres to the energy dissipation law (\ref{dEdt}),  and the numerical solutions are ensured to remain non-negative. The key concept involves projecting the energy flux, represented 
by $H'(\rho) + V(x) + W*\rho$ onto the numerical solution space at each time step. Subsequently, both the convective and diffusive components of equation (\ref{IVP}) are discretized using the obtained projected energy flux. The establishment of provable positivity in cell averages is achieved through  introducing an adaptive flux correction. Additionally, we adopt a limiting approach so that the positivity of the numerical solutions is enforced at each time step, without compromising accuracy, particularly for smooth solutions. 

The remaining part of the paper is organized as follows. 
In Section 2, we design the numerical method with spatial discretization for one dimensional problems, 
 and show the semi-discrete energy inequality.  
Section 3 is on the time discretization, the free energy dissipation property, the preservation of equilibria, and the positivity-preserving property of the fully discretized scheme under a local flux correction.  
In Section 4 we outline the limiting process,  the hybrid algorithm, and a brief discussion on strong-stability-preserving (SSP) time discretization.
Then in Section 5, we present numerical examples for {one dimensional problems.} Some conclusions are reported in Section 6.

\section{Direct DG discretization in space}	
In this section,  we introduce our DDG scheme for (\ref{IVP}) specifically within one-dimensional space. 
The extension of this formulation to Cartesian meshes in multidimensional scenarios is readily achievable.

\subsection{Scheme formulation}
In the one dimensional setting, where $\Omega = [a, b]$ is a bounded interval,  we employ a mesh partition consisting of a family of $N$ computational cells, denoted as $I_i$'s,  such that each $I_i$ is defined as  $(x_{i-\frac12}, x_{i+\frac12})$, where 
$$
a=x_{\frac12}<x_1<\cdots <x_{N-\frac12}<x_N<x_{N+\frac12}=b,
$$
and the cell center
$x_i =\frac{1}{2}(x_{i-\frac12} + x_{i+\frac12})$. Denote the mesh step $h_i=x_{i+\frac12}-x_{i-\frac12}$ and 
$h=\max_{1\leq i \leq N}h_i$.  

We will seek a numerical solution in the discontinuous piecewise polynomial space
$$
V_h= \left\{\xi(x)\in L^2(\Omega), \quad \xi|_{I_i}\in P^k(I_i), i=1,\cdots, N\right \},
$$
where $P^k(I_i)$ is the space of $k$-th order polynomials on $I_i$. To define the DG method we rewrite the equation \eqref{IVP} 
into the following system 
\begin{subequations}\label{reform}
\begin{numcases}{}
q =V(x)+H'(\rho) +W*\rho,\label{reform_q}\\
\partial_t \rho =\partial_x (\rho \partial_x q), \label{reform_rho}
\end{numcases}
\end{subequations}
where $q$ is the energy flux.
By applying the direct DG approximation, we obtain the following scheme.   We seek $\rho_h(t,\cdot) \in V_h$ such that for any $\eta(x), \xi(x) \in V_h$,
\begin{subequations}\label{semi_DG}
\begin{numcases}{}
\int_{I_i}q_h \eta\,dx =\int_{I_i}\left(V(x) +H'(\rho_h)+\sum_{m=1}^N \int_{I_m}  W(x-y) \rho_h(t, y)\,dy \right) \eta\,dx,\\
\int_{I_i}\partial_t \rho_h\xi \,dx =-\int_{I_i} \rho_h \partial_x q_{h} \partial_x \xi\,dx +\{\rho_h\}\widehat{\partial_x q_h}\xi |_{\partial I_i} +\{\rho_h\}\partial_x \xi (q_h-\{q_h\})|_{\partial I_i}.
\end{numcases}
\end{subequations}
Here $q_h(t, \cdot)$ is a projection of (\ref{reform}a) when evaluated at $\rho_h$,  and  the numerical flux at the interior cell interface $x_{i+\frac12}$ is given by
\begin{equation}
\label{DDGflux}
\widehat{\partial_xq_h}=\beta_0 \frac{[q_h]}{h} +\{\partial_x q_h\}+\beta_1 h[\partial_x^2 q_h].
\end{equation}
For simplicity, the meshsize is taken to be uniform and given by $h$. Otherwise the mesh in the flux formula needs to be replaced by 
$\frac{1}{2}(h_i +h_{i+1})$.   The notations $[q_h]=q_h^+ - q_h^-$ and $\{q_h\}=\frac{q_h^++q_h^-}{2}$ are adopted, with 
 $q_h^-$ and $q_h^+$ being the left and right limit of $q_h$. 

The boundary numerical flux is given to weakly enforce the specified boundary conditions. 
\begin{itemize}
\item[(B1)] If the zero-flux boundary conditions are specified at $x=a, b$, we simply set 
\begin{align*}
& \widehat{\partial_x q_h}=0,  \quad \{q_h\}=q_h^+ \quad   \{\rho_h\}=\rho_h^+ \quad \text{ at } x=x_{\frac12}, \\ 
& \widehat{\partial_x q_h}=0,  \quad \{q_h\}=q_h^- \quad    \{\rho_h\}=\rho_h^- \quad  \text{ at } x= x_{N+\frac12}.
\end{align*}
\item[(B2)] If Dirichlet boundary conditions are given, for instance, $\rho|_{\partial \Omega}=g(t, x)$, we set
\begin{align*}
\widehat{\partial_x q_h} & =\frac{\beta_0}{h } [q_h] +\partial_x q_{h}^+, \quad \{q_h\}=q_h^+-\frac{1}{2}[q_h], \quad   \{\rho_h\}=g(t, a)  \\
\widehat{\partial_x q_h} & =\frac{\beta_0}{h } [q_h] +\partial_x q_{h}^-, \quad \{q_h\}=q_h^-+\frac{1}{2}[q_h], \quad  \{\rho_h\}=g(t, b)
\end{align*} 
 with 
 $$
[q_h]  =
\left\{\begin{array}{ll}
q_h^+- \left( V(a) +H'(g(t, a)) +\sum\limits_{m=1}^N\int_{I_m} W(a-y)\rho_h(t, y)dy \right) &\quad \text{ for } x=a, \\
 \left( V(b) +H'(g(t, b)) +\sum\limits_{m=1}^N\int_{I_m} W(b-y)\rho_h(t, y)dy \right)- q_h^- &\quad \text{ for } x=b. 
\end{array} 
\right.
$$
\end{itemize}
We emphasize the significance of employing various boundary conditions in practical applications, as they play a crucial role in driving systems out of equilibrium and generating non-vanishing ionic fluxes. The numerical method outlined in this study offers flexibility for adaptation to different boundary conditions by adjusting the appropriate boundary fluxes.

\subsection{Energy dissipation inequality} We will demonstrate that the semi-discrete scheme satisfies the following energy dissipation inequality,  
denoted by the discrete energy: 
$$
E_h(t):=\sum_{i=1}^N\int_{I_i} \left(V(x) \rho_h(t ,x)+H(\rho_h(t, x)) +\frac{1}{2}\rho_h(t, x)  \sum_{m=1}^N\int_{I_m} W(x-y)\rho_h(t, y)dy\right)\,dx.
$$ 
Define the energy norm of $q_h$ by 
\begin{align}\label{qh_energy}
\|q_h\|_E:= \left[ \sum_{i=1}^N\int_{I_i} \rho_h |\partial_xq_h|^2\,dx 
    + \sum_{i=1}^{N-1}\left.\left(\frac{1}{h }\{\rho_h\}[q_h]^2 \right)\right|_{x_{i+\frac12}} \right]^{\frac12}.
\end{align}

\begin{thm}\label{semiEnergyDis} 
Assuming that the semi-discrete scheme with (B1) boundary setup admits a positive solution $\rho_h$, 
then it satisfies an energy dissipation law expressed as   
 \begin{align}\label{dE}
\frac{d}{dt} E_h(t)\leq -\gamma \|q_h\|_E^2
\end{align}
for some $\gamma \in (0, 1)$, provided 
 \begin{align}\label{lb}
  \beta_0 \geq  \max_{1\leq i \leq N-1}\frac{\left.\{\rho_h\}  \left( \{ \partial_x q_h \} +\frac{\beta_1}{2} h [\partial_x^2 q_h]\right)^2\right|_{x_{i+\frac12}} }{\frac{1}{2h }\left(\int_{I_i}+\int_{I_{i+1}}\right)\rho_h |\partial_x q_h |^2dx}.
\end{align}
\end{thm}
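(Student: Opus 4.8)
The plan is to differentiate the discrete energy $E_h(t)$ and substitute the evolution equation (\ref{semi_DG}b). Differentiating term by term, and using the symmetry $W(x-y)=W(y-x)$ to merge the two halves of the interaction energy into a single factor, I expect
\[
\frac{d}{dt}E_h(t)=\sum_{i=1}^N\int_{I_i}\Big(V(x)+H'(\rho_h)+\sum_{m=1}^N\int_{I_m}W(x-y)\rho_h(t,y)\,dy\Big)\partial_t\rho_h\,dx .
\]
The decisive observation is that, by (\ref{semi_DG}a), $q_h(t,\cdot)|_{I_i}$ is precisely the $L^2(I_i)$-projection of $V+H'(\rho_h)+W*\rho_h$ onto $P^k(I_i)$. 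Since the space $V_h$ is fixed in time, $\partial_t\rho_h(t,\cdot)|_{I_i}\in P^k(I_i)$, so the projection identity lets me replace the full integrand by $q_h$ without error, yielding $\frac{d}{dt}E_h=\sum_i\int_{I_i}q_h\,\partial_t\rho_h\,dx$.

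Next I would take $\xi=q_h$ in (\ref{semi_DG}b) and sum over $i$. The volume part gives $-\sum_i\int_{I_i}\rho_h|\partial_x q_h|^2\,dx$, while the two interface terms must be reassembled node by node: each interior node $x_{i+\frac12}$ is shared by $I_i$ and $I_{i+1}$, so the one-sided traces of $q_h$ and $\partial_x q_h$ combine into the jump $[q_h]$ and the average $\{\partial_x q_h\}$, and the zero-flux prescription (B1) annihilates the contributions at $x=a,b$. After substituting the flux (\ref{DDGflux}) I expect the clean identity
\[
\frac{d}{dt}E_h=-A-\beta_0 B-C,
\]
where $A:=\sum_{i}\int_{I_i}\rho_h|\partial_x q_h|^2\,dx\ge0$, $B:=\sum_{i=1}^{N-1}\frac1h\{\rho_h\}[q_h]^2\big|_{x_{i+\frac12}}\ge0$ (so that $\|q_h\|_E^2=A+B$), and the sign-indefinite cross term $C:=\sum_{i=1}^{N-1}2\{\rho_h\}[q_h]\,S\big|_{x_{i+\frac12}}$ carries the interface quantity $S:=\{\partial_x q_h\}+\frac{\beta_1}{2}h[\partial_x^2 q_h]$ appearing in (\ref{lb}). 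Everything then reduces to controlling $C$.

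The core step is to absorb $C$ using a weighted Young inequality, $2\{\rho_h\}|[q_h]||S|\le \mu\frac{\{\rho_h\}[q_h]^2}{h}+\frac1\mu h\{\rho_h\}S^2$, together with hypothesis (\ref{lb}) written in the sharp form $h\{\rho_h\}S^2\big|_{x_{i+\frac12}}\le\frac{\beta_0}{2}\big(\int_{I_i}+\int_{I_{i+1}}\big)\rho_h|\partial_x q_h|^2\,dx$. Summing the latter and noting that each cell integral is counted at most twice gives $\sum_i h\{\rho_h\}S^2\le\beta_0 A$; optimizing over $\mu$ then bounds $|C|\le 2\sqrt{\beta_0 A B}$, so that $\frac{d}{dt}E_h\le -A-\beta_0 B+2\sqrt{\beta_0 AB}=-(\sqrt A-\sqrt{\beta_0 B})^2\le0$. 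This already yields energy dissipation; it is exactly the mechanism for which (\ref{lb}) is designed, the penalty $\beta_0$ providing enough coercivity on the jumps to dominate the interface derivative energy $\{\rho_h\}S^2$ by the local volume dissipation.

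The main obstacle is upgrading this nonpositivity to the strict bound $-\gamma\|q_h\|_E^2$ with $\gamma\in(0,1)$: the form $-(\sqrt A-\sqrt{\beta_0 B})^2$ vanishes along the whole ray $A=\beta_0 B$, so no positive $\gamma$ can survive if (\ref{lb}) is used at equality. The remedy is that (\ref{lb}) carries genuine slack whenever $\beta_0$ strictly exceeds the stated maximum, i.e. $h\{\rho_h\}S^2\le\lambda\,\frac{\beta_0}{2}(\int_{I_i}+\int_{I_{i+1}})\rho_h|\partial_x q_h|^2$ for some $\lambda<1$. Then $|C|\le 2\sqrt{\lambda\beta_0 AB}$, and $A+\beta_0 B-2\sqrt{\lambda\beta_0 AB}\ge\gamma(A+B)$ reduces to the positive semidefiniteness of the symmetric form with diagonal entries $1-\gamma,\ \beta_0-\gamma$ and off-diagonal $-\sqrt{\lambda\beta_0}$, i.e. to $(1-\gamma)(\beta_0-\gamma)\ge\lambda\beta_0$ with $\gamma\le\min\{1,\beta_0\}$. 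This holds at $\gamma=0$ because $\lambda<1$, hence for all sufficiently small $\gamma>0$ by continuity, which produces the required $\gamma\in(0,1)$. Throughout, the step most prone to error is the node-by-node bookkeeping of one-sided traces and signs in the interface sum of the second paragraph.
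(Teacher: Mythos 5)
Your proposal is correct and follows essentially the same route as the paper's proof: testing (\ref{semi_DG}a) with $\eta=\partial_t\rho_h$ and (\ref{semi_DG}b) with $\xi=q_h$ to obtain $\frac{d}{dt}E_h=-A-\beta_0B-C$ in your notation, then absorbing the cross term $C$ via Young's inequality under the lower bound (\ref{lb}); your final $2\times2$ quadratic-form check is just a repackaging of the paper's step of splitting off a $\gamma$-fraction and requiring the remainder to be nonnegative. Your observation that equality in (\ref{lb}) only yields $\gamma=0$ and that genuine slack (strict inequality) is needed to extract $\gamma\in(0,1)$ is accurate and is in fact glossed over in the paper's own closing sentence (note that the fully discrete analogue (\ref{lb+}) is stated with strict inequality).
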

\begin{proof}
Summing \eqref{semi_DG} over the index $i$'s, we obtain a global formulation
\begin{subequations}\label{sum_semi}
\begin{align}
\sum_{i=1}^N\int_{I_i} q_h\eta\,dx &= \sum_{i=1}^N\int_{I_i} \left(V(x) +H'(\rho_h) +\sum_{m=1}^N\int_{I_m} W(x-y)\rho_h(t,y)\,dy \right)\eta\,dx, \label{q_proj}\\
\sum_{i=1}^N\int_{I_i} \partial_t\rho_h \xi\, dx &= - \sum_{i=1}^N \int_{I_i} \rho_h \partial_xq_h \partial_x\xi\,dx - \sum_{i=1}^{N-1} \{\rho_h \} \left.\left(
\widehat{\partial_xq_h} [\xi]+ \{ \partial_x\xi\}[q_h]\right)\right|_{x_{i+\frac12}},
\end{align}
\end{subequations}
where boundary fluxes given in (B1) have been used. Take $\eta=\partial_t\rho_h$  in (\ref{sum_semi}a) to obtain
\begin{align*}
\sum_{i=1}^N\int_{I_i}\partial_t\rho_h q_h\,dx & =\sum_{i=1}^N\int_{I_i} \left( V(x)+H'(\rho_h)+\sum_{m=1}^N\int_{I_m}W(x-y)\rho_h(t,y)\,dy \right)\partial_t\rho_h\,dx\\
& =\frac{d}{dt}E_h(t),
\end{align*}
where we have used the symmetry of $W$.  This when combined with  (\ref{sum_semi}b)  taking $\xi=q_h$ in (\ref{sum_semi}b) gives 
\begin{align*}
\frac{d}{dt}E_h(t) &= -\sum_{i=1}^N\int_{I_i} \rho_h |\partial_xq_h|^2\,dx -\sum_{i=1}^{N-1} \{\rho_h \} [q_h]\left.\left( \widehat{\partial_xq_h}+\{\partial_xq_h\}\right)\right|_{x_{i+\frac12}}\\
 & =   -\sum_{i=1}^N\int_{I_i} \rho_h|\partial_xq_h|^2\,dx -\sum_{i=1}^{N-1} \{\rho_h \} \left.\left(\frac{\beta_0}{h }[q_h]^2 +[q_h]( 2\{\partial_xq_h\}
 +\beta_1h [\partial_x^2q_h])\right)\right|_{x_{i+\frac12}}.
\end{align*}
Using Young's inequality, we obtain
$$
-[q_h](2\{\partial_xq_h\} + \beta_1h [\partial_x^2q_h])\leq \frac{\beta_0(1-\gamma)}{h }[q_h]^2 + \frac{h }{4\beta_0(1-\gamma)} \left( 2\{\partial_xq_h\} 
    +\beta_1h [\partial_x^2q_h]\right)^2
$$
for any $\gamma \in (0,1)$. It follows that
\begin{align*}
\frac{d}{dt}E_h(t) & \leq  -\gamma\left[ \sum_{i=1}^N\int_{I_i} \rho_h |\partial_xq_h|^2\,dx 
  + \sum_{i=1}^{N-1}\left.\left(\frac{\beta_0}{h }\{\rho_h\}[q_h]^2 \right)\right|_{x_{i+\frac12}} \right] \\ 
& \quad - \left[ (1-\gamma)\sum_{i=1}^N\int_{I_i} \rho_h |\partial_xq_h|^2\,dx 
  - \sum_{i=1}^{N-1} \left.\frac{h \{\rho_h\}}{4\beta_0(1-\gamma)}\left( 2\{\partial_xq_h\} +\beta_1h [\partial_x^2q_h]\right)^2\right|_{x_{i+\frac12}} \right]. 
\end{align*}
For any fixed $\beta_1$, it suffices to choose $\beta_0>1$ large enough so that 
\[
\frac{h \{\rho_h\}}{4\beta_0(1-\gamma)}\left( 2\{\partial_xq_h\} +\beta_1h [\partial_x^2q_h]\right)^2
\leq \frac{1-\gamma}{2}\left(\int_{I_i} + \int_{I_{i+1}}\right)\rho_h|\partial_xq_h|^2\,dx,
\]
at all the interface points $x_{i+\frac12}, i=1, \ldots, N-1$.  That is, 
\begin{equation}\label{semi_beta_0}
\beta_0 \geq \frac{1}{4(1-\gamma)^2}\frac{\left.\{\rho_h\}\left( 2\{\partial_xq_h\} +\beta_1h [\partial_x^2q_h]\right)^2\right|_{x_{i+\frac12}}}
{\frac{1}{2h }\left(\int_{I_i} + \int_{I_{i+1}}\right)\rho_h|\partial_xq_h|^2\,dx}. 
\end{equation}
For $\beta_0$ satisfying (\ref{lb}), we can indeed find $\gamma \in (0, 1)$ such that (\ref{semi_beta_0}) holds, hence (\ref{dE}).
\end{proof}

\begin{rem} The lower bound for $\beta_0$ given in (\ref{lb}), based on the evaluation of the ratio over numerical solutions, proves to be cumbersome to use.  To establish a range for $\beta_0$ independent of numerical solutions, we assume that $\rho_h$ represents a high-order approximation of a positive and smooth $\rho(t,x)$. Without loss of generality, let the accuracy order be $k+1$.  Consequently,  
$$
    \rho_h^\pm(t, x_{i+\frac12}) = \rho(t, x_{i+\frac12})+\mathcal{O}\big(h^{k+1}\big) \text{ and }\rho_h(t, x) = \rho(t, x_{i+\frac12})+\mathcal{O}(h )
\text{ for } j = 1, \ldots, N-1.
$$
For sufficiently small $h $, we have 
$$
\frac{\{\rho_h\}|_{x_{i+\frac12}}}{\rho_h|_{I_i}},\quad 
\frac{\{\rho_h\}|_{x_{i+\frac12}}}{\rho_h|_{I_i+1}}\leq 2.
$$  
Therefore, we can infer that
\begin{align*}
\frac{\left.\{\rho_h\}\left( 2\{\partial_xq_h\} +\beta_1h [\partial_x^2q_h]\right)^2\right|_{x_{i+\frac12}}}
{\frac{1}{2h }\left(\int_{I_i} + \int_{I_{i+1}}\right)\rho_h|\partial_xq_h|^2\,dx}
\leq 2\frac{\left.\left( \{ \partial_x q_h \} +\frac{\beta_1}{2} h [\partial_x^2 q_h]\right)^2\right|_{x_{i+\frac12}} }
{\frac{1}{2h }\left(\int_{I_i}+\int_{I_{i+1}}\right) |\partial_x q_h |^2dx}
\leq 2\Gamma,
\end{align*}
where 
\[
\Gamma = \max_{1\leq i\leq N-1} \frac{\left.\left(\{\partial_x q_h\}+\frac{\beta_1}{2} h [\partial_x^2 q_h]\right)^2\right|_{x_{i+\frac12}} }
{\frac{1}{2h }\left(\int_{I_i}+\int_{I_{i+1}}\right) |\partial_x q_h |^2dx}.
\]
The estimate of $\Gamma$ as provided in  \cite{Liu15} enables us to conclude that it suffices to choose   
\begin{equation}
\beta_0 > 2 \Gamma  = 2 k^2 \left(1 - \beta_1 (k^2-1)  +\frac{\beta_1^2}{3} (k^2-1)^2 \right).
\end{equation}
\end{rem}

\section{Time discretization and structure preservation}
For the time discretization, let $\Delta t$ be the time step, and $\rho_h^n(x)$ denote the numerical approximation to $\rho_h(t^n, x)$ with $t^n = n\Delta t$.

\subsection{The energy dissipation law} 
We apply the Forward Euler method for the time discretization of (\ref{semi_DG}). We seek   $\rho_h^{n+1}(x)\in V_h$ such that for any $\xi(x), \eta(x) \in V_h$, the following conditions hold: 
\begin{subequations}\label{fully_DG}
\begin{numcases}{}
\int_{I_i}q_h^n \eta\,dx =\int_{I_i}\left(V(x) +H'(\rho_h^n)+\sum_{m=1}^N \int_{I_m}  W(x-y) \rho_h^n(y)\,dy \right) \eta\,dx,\\
    \int_{I_i}D_t\rho_h^n \xi \,dx =-\int_{I_i} \rho_h^n \partial_x q_{h}^n \partial_x \xi\,dx 
	+\{\rho_h^n\}\left.\left[\widehat{\partial_x q_h^n }\xi + \partial_x \xi (q_h^n-\{q_h^n\})\right]\right|_{\partial I_i}.
\end{numcases}
\end{subequations}
Here and in what follows, $D_t$ is sued to denote the forward difference operator in time, defined as 
$$
D_t u^n=\frac{u^{n+1}-u^n}{\Delta t}
$$
for any function $u^n$.  The explicit time discretization is straightforward to implement, while still preserving the energy dissipation law  under certain constraints on the time step.  

\begin{thm}\label{thfullyenergy}
With the discrete energy defined as
$$
E^n =\sum_{i=1}^N\int_{I_i} \left(V(x) \rho_h^n(x)+H(\rho_h^n(x)) +\frac{1}{2}\rho_h^n(x)  \sum_{m=1}^N\int_{I_m} W(x-y)\rho_h^n(y)dy\right)dx,
$$ 
the DG scheme \eqref{fully_DG},  subject to zero-flux boundary setup (B1),  satisfies
$$
    D_tE^n\leq -\frac{\gamma}{2}\|q_h^n\|_E^2
$$
for some $\gamma \in (0, 1)$, provided $\rho_h^n(x)$ remains positive, $\Delta t$ is suitably small, and 
 \begin{align}\label{lb+}
  \beta_0 >  \max_{1\leq i \leq N-1}\frac{\left.\{\rho_h^n\}  \left( \{ \partial_x q_h^n \} +\frac{\beta_1}{2} h [\partial_x^2 q_h^n]\right)^2\right|_{x_{i+\frac12}} }{\frac{1}{2h }\left(\int_{I_i}+\int_{I_{i+1}}\right)\rho_h^n |\partial_x q_h^n |^2dx}.
\end{align}
\end{thm}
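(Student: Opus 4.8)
The plan is to mirror the proof of Theorem~\ref{semiEnergyDis}, treating the forward-Euler discretization error as a perturbation to be absorbed under a parabolic-type time-step restriction. I would split the energy as $E^n = \sum_i\int_{I_i} V\rho_h^n\,dx + \sum_i\int_{I_i}H(\rho_h^n)\,dx + B[\rho_h^n]$, where $B[\rho_h^n]=\tfrac12\sum_{i,m}\int_{I_i}\int_{I_m}W(x-y)\rho_h^n(x)\rho_h^n(y)\,dy\,dx$ is the symmetric quadratic interaction part. The confinement part is linear in $\rho_h^n$, so its forward difference is exactly $\sum_i\int_{I_i}V\,D_t\rho_h^n\,dx$. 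For the internal energy I would expand by Taylor with a mean-value remainder, $D_t\!\sum_i\int_{I_i}H(\rho_h^n)\,dx = \sum_i\int_{I_i}H'(\rho_h^n)D_t\rho_h^n\,dx + \tfrac{\Delta t}{2}\sum_i\int_{I_i}H''(\zeta)(D_t\rho_h^n)^2\,dx$ with $\zeta$ between $\rho_h^n$ and $\rho_h^{n+1}$; and since $B$ is a symmetric bilinear form, an exact expansion gives $D_t B[\rho_h^n] = \sum_i\int_{I_i}(W*\rho_h^n)D_t\rho_h^n\,dx + \tfrac{\Delta t}{2}\langle D_t\rho_h^n, W*D_t\rho_h^n\rangle$, where $\langle\cdot,\cdot\rangle$ is the $L^2$ pairing.

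Adding these three contributions and invoking the projection equation (\ref{fully_DG}a) with $\eta=D_t\rho_h^n$, the three leading terms collapse exactly to $\sum_i\int_{I_i}q_h^n\,D_t\rho_h^n\,dx$, so that $D_t E^n = \sum_i\int_{I_i}q_h^n D_t\rho_h^n\,dx + R^n$, with the $\mathcal O(\Delta t)$ forward-Euler error $R^n := \tfrac{\Delta t}{2}\sum_i\int_{I_i}H''(\zeta)(D_t\rho_h^n)^2\,dx + \tfrac{\Delta t}{2}\langle D_t\rho_h^n, W*D_t\rho_h^n\rangle$. For the principal term I would take $\xi = q_h^n$ in the evolution equation (\ref{fully_DG}b) and repeat verbatim the manipulation in the proof of Theorem~\ref{semiEnergyDis} — cancellation into $-\sum_i\int_{I_i}\rho_h^n|\partial_x q_h^n|^2\,dx$ minus the interface terms, expansion of $\widehat{\partial_x q_h^n}$, and Young's inequality — which, under the (now strict) lower bound (\ref{lb+}) on $\beta_0$, yields $\sum_i\int_{I_i}q_h^n D_t\rho_h^n\,dx \leq -\gamma\|q_h^n\|_E^2$ for a suitable $\gamma\in(0,1)$.

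It then remains to control $R^n$. Using positivity (and the consequent two-sided bounds on the numerical range of $\rho_h^n$, so that $H''(\zeta)$ stays bounded) together with Young's convolution inequality for the $W$ term, I would estimate $|R^n|\leq C\Delta t\,\|D_t\rho_h^n\|_{L^2}^2$. The crux is to bound $\|D_t\rho_h^n\|_{L^2}$ by $\|q_h^n\|_E$, which I would do by testing (\ref{fully_DG}b) with $\xi=D_t\rho_h^n$ itself and estimating every volume and interface term on the right by Cauchy--Schwarz together with the standard inverse inequality $\|\partial_x v\|_{L^2(I_i)}\leq Ch^{-1}\|v\|_{L^2(I_i)}$ and the trace/jump inequality $\|v\|_{\partial I_i}\leq Ch^{-1/2}\|v\|_{L^2(I_i)}$ on $V_h$; each term is bounded by $Ch^{-1}\|q_h^n\|_E\|D_t\rho_h^n\|_{L^2}$, and after dividing through one self-consistently obtains $\|D_t\rho_h^n\|_{L^2}^2\leq Ch^{-2}\|q_h^n\|_E^2$. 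Combining, $|R^n|\leq C\Delta t\,h^{-2}\|q_h^n\|_E^2$, and choosing $\Delta t\leq \tfrac{\gamma}{2C}h^2$ absorbs the error into half the dissipation, giving $D_t E^n\leq -\tfrac{\gamma}{2}\|q_h^n\|_E^2$; this makes the ``suitably small $\Delta t$'' hypothesis precise as a CFL condition $\Delta t = \mathcal O(h^2)$.

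I expect the main obstacle to be this last inverse-inequality step: because $D_t\rho_h^n$ appears on both sides of (\ref{fully_DG}b) and the DDG flux $\widehat{\partial_x q_h^n}$ carries the high-order interface contribution $h[\partial_x^2 q_h^n]$, one must track the powers of $h$ through the trace and inverse estimates carefully — the jump term $[q_h^n]/h$ being the most delicate — to ensure the final exponent is exactly $h^{-2}$ and no worse. The boundedness of $H''(\zeta)$ near vacuum is a secondary technical point that the positivity assumption is meant to handle.
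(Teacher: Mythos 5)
Your proposal follows essentially the same route as the paper: Taylor-expand $E^{n+1}$ about $\rho_h^n$ with mean-value remainders, test (\ref{fully_DG}a) with $\eta=D_t\rho_h^n$ and (\ref{fully_DG}b) with $\xi=q_h^n$, reuse the semi-discrete Young's-inequality argument under \eqref{lb+}, and absorb the $\mathcal{O}(\Delta t)$ remainder $\tfrac{\Delta t}{2}(Q_1+Q_2)$ into half the dissipation. The only difference is that the paper stops at the implicit, solution-dependent condition $\Delta t\leq \gamma\|q_h^n\|_E^2/(Q_1+Q_2)^+$, whereas you go further and sketch an inverse-inequality bound $\|D_t\rho_h^n\|_{L^2}^2\leq Ch^{-2}\|q_h^n\|_E^2$ to turn it into an explicit $\Delta t=\mathcal{O}(h^2)$ CFL condition --- a harmless (indeed clarifying) strengthening, modulo the solution-dependent bounds on $\rho_h^n$ and $H''$ that you already flag.
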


\begin{proof}
Summing equation \eqref{fully_DG} over all indices  $i$, we obtain a global formulation:
\begin{align}
\sum_{i=1}^N\int_{I_i} q_h^n\eta\,dx  
& = \sum_{i=1}^N\int_{I_i}\left(V(x) +H'(\rho_h^n) +\sum_{m=1}^N\int_{I_m} W(x-y)\rho_h^n(y)\,dy \right)\eta\,dx,\label{q_sum}\\
\sum_{i=1}^N\int_{I_i} D_t \rho_h^n  \xi\, dx & = - \sum_{i=1}^N \int_{I_i} \rho_h^n  \partial_xq_h^n \partial_x\xi\,dx 
	- \sum_{i=1}^{N-1} \{\rho_h^n \} \left.\left(\widehat{\partial_xq^n_h} [\xi]+ \{ \partial_x\xi\}[q^n_h]\right)\right|_{x_{i+\frac12}}. \label{rho_sum}
\end{align}
Take $\eta=D_t\rho_h^n $  in (\ref{q_sum}) to obtain
\begin{align*}
 \int_\Omega D_t\rho_h^n q_h^n dx  & =\int_\Omega \left( V(x)+H'(\rho_h^n(x))   +  \sum_{m=1}^N\int_{I_m}W(x-y)\rho_h^n(y)\,dy \right)D_t\rho_h^n\,dx \\
& = D_t E^n -\frac{\Delta t}{2} \int_{\Omega}H''(\cdot)(D_t\rho_h^n)^2dx  \\
& \,\,\,\,\,\, - \frac{\Delta t}{2}\int_{\Omega}\int_\Omega W(x-y)D_t\rho_h^n(x)D_t\rho_h^n(y)dxdy,
\end{align*}
where $\cdot$ is a mean value between $\rho_h^n$ and $\rho_h^{n+1}$. Taking $\xi=q_h^n$, (\ref{rho_sum}) becomes
\begin{align*}
 \int_\Omega D_t\rho_h^n q_h^n dx  &= -\sum_{i=1}^N\int_{I_i} \rho_h^n |\partial_xq_h^n |^2\,dx -\sum_{i=1}^{N-1} \{\rho_h^n \} [q_h^n]\left.\left( \widehat{\partial_xq_h^n}+\{\partial_xq_h^n\}\right)\right|_{x_{i+\frac12}}\\
& \leq  -\gamma \|q_h^n\|_E^2,
\end{align*}
provided $\beta_0$ is suitably large so that 
$$
\beta_0 >\beta_0(1-\gamma)^2 \geq  
    \max_{1\leq i \leq N-1}\left.\frac{\{\rho_h^n\}  \left( \{ \partial_x q_h^n \} +\frac{\beta_1}{2} h [\partial_x^2 q_h^n]\right)^2 }{\frac{1}{2h }\left(\int_{I_i}+\int_{I_{i+1}}\right)\rho_h^n |\partial_x q_h^n |^2dx}\right|_{x_{i+\frac12}}.
$$
Hence, we deduce that 
$$
D_t E^n\leq -\gamma \|q_h^n\|_E^2 + \frac{\Delta t}{2} (Q_1+Q_2), 
$$
with 
\begin{align*}
Q_1 & = \int_{\Omega}H''(\cdot)(D_t\rho_h^n)^2\,dx, \\
Q_2 & = \int_\Omega\int_\Omega W(x-y)D_t\rho_h^n(x)D_t\rho_h^n(y)\,dxdy.
\end{align*}
The claimed estimate follows if 
$$
\Delta t \leq \frac{\gamma \|q_h^n\|_E^2}{(Q_1+Q_2)^+}. 
$$
\end{proof}

\begin{rem}
Notice that there might be a requirement on $W$ such that $Q_2 \geq 0$, as discussed in \cite{BCH2020}. However, we prefer to state the result in its most general form.  
\end{rem}
Furthermore, the fully discrete scheme is capable of preserving positive equilibrium solutions.

\begin{thm}
Suppose that the fully-discrete scheme \eqref{fully_DG} with (B1) boundary setup yields a nonnegative solution $\rho_h^n$. Assume that an equilibrium solution, denoted by $\rho_\infty$,  satisfies  $\frac{d}{dt}E[\rho_\infty] = 0$ and is positive in $\Omega$.
Let $\mathcal{P}_{V_h}$ be the projection operator onto the solution space $V_h$. Then it preserves the equilibrium solution in the following sense: 
\begin{enumerate}
\item[(i)] If $\rho_h^0(x) = \mathcal{P}_{V_h}\rho_\infty(x)$, then $\rho_h^1(x) = \rho_h^0(x)$ in $\Omega$.
\item[(ii)] If $D_tE^n \equiv 0$, then $\rho_h^{n+1} = \rho_h^n \quad \forall x\in \Omega$.
\end{enumerate}
\end{thm}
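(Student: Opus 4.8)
The plan is to reduce both statements to a single observation: if the discrete energy flux $q_h$ is a global constant on $\Omega$, then the right-hand side of the update equation (\ref{fully_DG}b) vanishes identically. Indeed, a global constant has $\partial_x q_h\equiv 0$, $\partial_x^2 q_h\equiv 0$, and $[q_h]=0$ at every interface, so by \eqref{DDGflux} the numerical flux $\widehat{\partial_x q_h}=0$, while the penalty-type term carries the factor $q_h-\{q_h\}=0$; under (B1) the two domain-boundary contributions vanish for the same reason. Hence $\int_{I_i} D_t\rho_h\,\xi\,dx=0$ for every $\xi\in V_h$, and since $D_t\rho_h=(\rho_h^{n+1}-\rho_h^n)/\Delta t\in V_h$, choosing $\xi=D_t\rho_h$ gives $\int_{I_i}(D_t\rho_h)^2\,dx=0$, i.e. $D_t\rho_h=0$ on each $I_i$, so $\rho_h^{n+1}=\rho_h^n$. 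It therefore remains, in each part, to show that the relevant flux is constant.

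For part (ii) I would invoke Theorem \ref{thfullyenergy} directly. Under its hypotheses (positivity of $\rho_h^n$, small $\Delta t$, and \eqref{lb+}) we have $D_tE^n\le -\tfrac{\gamma}{2}\|q_h^n\|_E^2$ with $\gamma\in(0,1)$. If $D_tE^n\equiv 0$, this forces $\|q_h^n\|_E^2\le 0$, hence $\|q_h^n\|_E=0$. Reading off the two nonnegative contributions in \eqref{qh_energy} and using $\rho_h^n>0$ (so $\{\rho_h^n\}>0$ at interfaces), both must vanish: $\int_{I_i}\rho_h^n|\partial_x q_h^n|^2\,dx=0$ gives $\partial_x q_h^n\equiv 0$ on each cell, and $\sum_i \tfrac1h\{\rho_h^n\}[q_h^n]^2=0$ gives $[q_h^n]=0$ at every interface. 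A piecewise function that is cell-wise constant with no interface jump is a single global constant, so $q_h^n$ is constant and the reduction above closes part (ii).

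For part (i) the first step is at the continuous level: since $\rho_\infty>0$ and $\tfrac{d}{dt}E[\rho_\infty]=0$, the dissipation identity \eqref{dEdt} forces $\nabla_x(V+H'(\rho_\infty)+W*\rho_\infty)=0$, i.e. the continuous energy flux $q_\infty:=V+H'(\rho_\infty)+W*\rho_\infty$ equals a constant $C$ on $\Omega$. The goal is then to show that the discrete flux $q_h^0$ produced by (\ref{fully_DG}a) from $\rho_h^0=\mathcal P_{V_h}\rho_\infty$ inherits this constancy, $q_h^0\equiv C$; the reduction above then yields $\rho_h^1=\rho_h^0$.

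The main obstacle is exactly this last implication. Equation (\ref{fully_DG}a) defines $q_h^0$ as the $L^2(I_i)$-projection onto $V_h$ of $V+H'(\rho_h^0)+\sum_m\int_{I_m}W(\cdot-y)\rho_h^0\,dy$; because $H'$ is nonlinear and the discrete interaction uses $\rho_h^0$ rather than $\rho_\infty$, this bracket is generally not the constant $C$ when $\rho_h^0$ is a naive $L^2$-projection of $\rho_\infty$, so $q_h^0$ need not be constant. The clean way to close the argument is to read $\mathcal P_{V_h}$ as the projection tied to the scheme's energy flux, namely to define $\rho_h^0\in V_h$ by requiring $\int_{I_i}\bigl(V+H'(\rho_h^0)+\sum_m\int_{I_m}W(\cdot-y)\rho_h^0\,dy\bigr)\eta\,dx=\int_{I_i}C\,\eta\,dx$ for all $\eta\in V_h$. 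With this convention (\ref{fully_DG}a) gives $\int_{I_i}q_h^0\eta\,dx=\int_{I_i}C\eta\,dx$ for all $\eta\in V_h$, and since $C\in V_h$ the $L^2$-projection returns it exactly, so $q_h^0\equiv C$. I would therefore make the meaning of $\mathcal P_{V_h}$ precise as this energy-flux–matching (well-prepared) projection, and flag that identifying it as the correct notion of projection — reconciling the nonlinearity of $H'$ and the discrete convolution with the desired exact constancy of $q_h^0$ — is the genuinely delicate point, whereas the remaining algebra is the same cancellation already used in part (ii).
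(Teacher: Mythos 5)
Your argument follows the paper's proof almost verbatim in overall structure: both parts are reduced to showing that the discrete flux is a global constant, after which testing (\ref{fully_DG}b) with $\xi = D_t\rho_h^n$ kills the update; and part (ii) is obtained, exactly as in the paper, by combining Theorem \ref{thfullyenergy} with the vanishing of the two nonnegative contributions to $\|q_h^n\|_E^2$ in \eqref{qh_energy}. The one place you diverge is part (i). The paper simply asserts that $\rho_h^0 = \mathcal{P}_{V_h}\rho_\infty$ implies $q_h^0 = \mathcal{P}_{V_h}q_\infty$, hence constant; you correctly observe that this implication is not automatic for the $L^2$ projection, since (\ref{fully_DG}a) projects $V + H'(\rho_h^0) + \sum_m\int_{I_m}W(\cdot - y)\rho_h^0\,dy$ rather than $V + H'(\rho_\infty) + W*\rho_\infty$, and the nonlinearity of $H'$ together with the replacement of $\rho_\infty$ by $\rho_h^0$ inside the convolution breaks the identification except in special cases (e.g.\ $H'$ linear and $W\equiv 0$). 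Your proposed repair --- reading $\mathcal{P}_{V_h}$ as the well-prepared projection defined by matching the discrete energy flux to the constant $q_\infty$ --- is a legitimate way to make (i) rigorous, at the cost of changing what ``projection'' means in the statement; the paper's own proof leaves this point implicit and is, strictly speaking, incomplete at exactly the step you flag. In short: same route, and the delicate point you isolate is a real one in the published argument, not an artifact of your reconstruction.
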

\begin{proof}
    (i) 
Let $q_\infty =V + H'(\rho_\infty) + W*\rho_\infty$.
Since $\rho_\infty$ is a positive equilibrium that satisfies 
\[
\frac{d}{dt}E[\rho_\infty] = -\int_\Omega \rho_\infty|\nabla_x(V + H'(\rho_\infty) + W*\rho_\infty)|^2\,dx
= -\int_\Omega \rho_\infty |\nabla_x q_\infty|^2\,dx = 0, 
\]
then $q_\infty = V + H'(\rho_\infty) + W*\rho_\infty$ must be a constant in $\Omega$. As a result, the $L^2$ projection of $q_\infty$ onto the solution space $V_h$, denoted by $\mathcal{P}_{V_h} q_\infty$, is constant  as well. Now we start with the initial data $\rho_h^0(x) = \mathcal{P}_{V_h}\rho_\infty(x)$, and then $q_h^0(x) = \mathcal{P}_{V_h} q_\infty$ is a constant.
Using the scheme (\ref{fully_DG}b), we have
\[
    \int_{I_i}D_t\rho_h^0\xi\,dx = 0 \quad \text{ for any } \xi(x)\in V_h, i = 1, \ldots, N. 
\]
    Note that $D_t\rho_h^0 = \frac{\rho_h^1-\rho_h^0}{\Delta t} \in V_h$ can be taken as $\xi(x)$, we thus have 
 \[
     D_t\rho_h^0 = \frac{\rho_h^1-\rho_h^0}{\Delta t} = 0 \quad \text{ for } i = 1, \ldots, N,
 \]
 i.e., $\rho_h^1(x) = \rho_h^0(x)$ in $\Omega$.

(ii) Suppose that $D_t E^n \equiv 0$. 
    Since $D_tE^n\leq -\frac{\gamma}{2}\|q_h^n\|_E^2$, it implies that
    \[
        \|q_h^n\|_E^2 = \sum_{i=1}^N\int_{I_i} \rho_h^n |\partial_xq_h^n|^2\,dx 
    + \sum_{i=1}^{N-1}\left.\left(\frac{1}{h }\{\rho_h^n\}[q_h^n]^2 \right)\right|_{x_{i+\frac12}} = 0.
    \]
Thanks to the nonnegativity of $\rho_h^n$ and that $\rho_h^n$ being a smooth polynomial function in each cell $I_i$, we must have
    \[
        \partial_x q_h^n \equiv 0 \text{ in each } I_i, \text{ and } [q_h^n] = 0 \text{ for } i = 1, \ldots, N-1.
    \]
Therefore $q_h^n$ is constant in the whole domain $\Omega$. 
    Again using the scheme (\ref{fully_DG}b), we have
    \[
        \int_{I_i}D_t\rho_h^n\xi\,dx = 0 \quad \text{ for any } \xi(x)\in V_h, i = 1, \ldots, N. 
        \Longrightarrow \rho_h^{n+1} = \rho_h^n \text{ in } \Omega.
    \]
\end{proof}

\subsection{Preservation of positivity of cell averages}
The numerical method \eqref{fully_DG} itself does not guarantee the positivity $\rho_h^n$ as time evolves. The approach outlined in \cite{LW17} aims to reconstruct positive densities based on cell averages.  However, as demonstrated in \cite{LW17} numerical evidence suggests that even the cell average $\bar\rho_i^{n}$ can become nonpositive for certain $i$ as $n$ increases,  leading to the failure of the reconstruction step. In this paper, we propose a modification to ensure  the positivity of $\bar\rho_i^{n}$  at each time step. We introduce an additional local term in the diffusive numerical flux to facilitate the recovery of positivity for cell averages.  

Firstly,  let us  consider the Euler forward time stepping. The modified DG scheme now becomes 
\begin{subequations}\label{fullyPDG}
\begin{numcases}{}
\int_{I_i}q_h^n \eta\,dx =\int_{I_i}\left(V(x) +H'(\rho_h^n)+\sum_{m=1}^N \int_{I_m}  W(x-y) \rho_h^n(t, y)\,dy \right) \eta\,dx,\\
\int_{I_i}D_t \rho_h^n\xi \,dx =-\int_{I_i} \rho_h^n \partial_x q_{h}^n \partial_x \xi\,dx +\{\rho_h^n\}\widetilde{\partial_x q_h^n}\xi |_{\partial I_i} +\{\rho_h^n\}\partial_x \xi (q_h^n-\{q_h^n\})|_{\partial I_i},
\end{numcases}
\end{subequations}
where the flux at the interior cell interface $x_{i+\frac12}$ is given by
\begin{equation}\label{flux}
\widetilde{\partial_xq_h}=\widehat{\partial_xq_h} + \frac{1}{2} \beta_{i+\frac12}[\rho_h], 
\end{equation}
where 
\begin{equation}\label{defbeta12}
    \beta_{i+\frac12}= \left\{\begin{array}{cl}
        \frac{|\widehat{\partial_x q_h^n}|}{\{\rho_h^n\}}, &\text{ if } \{\rho_h^n\} > 0, \\
0, &\text{ if } \{\rho_h^n\} = 0.
\end{array}\right.
\end{equation}
This scheme  ensures that the cell average $\bar\rho_i^{n} > 0$ for all $n$, as demonstrated  by the following  theorem.
\begin{thm}\label{THMfullyPDG}
For the fully discrete scheme \eqref{fullyPDG}, the cell average $\bar\rho_i^{n+1} > 0$ is guaranteed,  provided $\rho_h^n(x)>0$ and the CFL condition
\[
    \lambda :=\frac{\Delta t}{h ^2} \leq \frac{\omega_1}{h }\min_{1\leq i\leq N-1} \left.|\widehat{\partial_x q_h^n}|^{-1}\right|_{x_{i+\frac12}}
\]
is satisfied. Here,  $\omega_1$ represents the weight of the Gauss-Lobatto quadrature rules with $M \geq \frac{k+3}{2}$ points.
\end{thm}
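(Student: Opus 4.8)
The plan is to test the scheme \eqref{fullyPDG} against the test function $\xi\equiv 1$ on a single cell and then run the weak-positivity argument of Zhang--Shu on the resulting update for the cell average. First I would take $\xi=1$ on $I_i$ (and zero elsewhere) in (\ref{fullyPDG}b). The volume term $\int_{I_i}\rho_h^n\partial_x q_h^n\partial_x\xi\,dx$ and the last interface term both vanish since $\partial_x\xi=0$, leaving only the corrected diffusive flux. Unwinding the integration-by-parts boundary convention $f|_{\partial I_i}=f|_{x_{i+\frac12}}-f|_{x_{i-\frac12}}$ then gives the finite-volume form
\[
\bar\rho_i^{n+1}=\bar\rho_i^n+\frac{\Delta t}{h}\left(\hat F_{i+\frac12}-\hat F_{i-\frac12}\right),\qquad \hat F_{i+\frac12}=\left.\{\rho_h^n\}\,\widetilde{\partial_x q_h^n}\right|_{x_{i+\frac12}}.
\]

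The crucial step is to identify the effect of the correction \eqref{flux}--\eqref{defbeta12}. Writing $a:=\widehat{\partial_x q_h^n}|_{x_{i+\frac12}}$ and noting that $\{\rho_h^n\}\beta_{i+\frac12}=|a|$ whenever $\{\rho_h^n\}>0$, I would substitute $\widetilde{\partial_x q_h^n}=\widehat{\partial_x q_h^n}+\tfrac12\beta_{i+\frac12}[\rho_h^n]$ into $\hat F_{i+\frac12}$ and expand with $\{\rho_h^n\}=\tfrac12(\rho^++\rho^-)$ and $[\rho_h^n]=\rho^+-\rho^-$. Using $a\pm|a|=2a^{\pm}$ with $a^{\pm}:=\max(\pm a,0)\ge 0$, the flux collapses to the upwind form
\[
\hat F_{i+\frac12}=\rho^+_{i+\frac12}\,a^+_{i+\frac12}-\rho^-_{i+\frac12}\,a^-_{i+\frac12}.
\]
Since the hypothesis $\rho_h^n>0$ forces $\{\rho_h^n\}=\tfrac12(\rho^++\rho^-)>0$ at every interface, the degenerate branch of \eqref{defbeta12} never activates; it is present only to keep the scheme well defined in the merely nonnegative case. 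This monotone upwind structure, engineered by the correction, is the heart of the argument.

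Next comes the Zhang--Shu decomposition. Because $\rho_h^n|_{I_i}\in P^k$ and the $M$-point Gauss--Lobatto rule is exact for polynomials of degree $2M-3\ge k$, guaranteed by $M\ge(k+3)/2$, the (normalized) cell average splits as
\[
\bar\rho_i^n=\omega_1\,\rho^+_{i-\frac12}+\omega_1\,\rho^-_{i+\frac12}+\sum_{\mu\ \mathrm{int}}\omega_\mu\,\rho_h^n(x_i^\mu),
\]
where $\omega_1$ is the positive endpoint weight and the interior weights $\omega_\mu$ are positive. Substituting this and the upwind fluxes into the update and regrouping by the nodal value of $\rho_h^n$ multiplying each coefficient, I would obtain five nonnegative groups: the interior-node terms $\sum_\mu\omega_\mu\rho_h^n(x_i^\mu)$; the cross-interface contributions $\tfrac{\Delta t}{h}\rho^+_{i+\frac12}a^+_{i+\frac12}$ and $\tfrac{\Delta t}{h}\rho^-_{i-\frac12}a^-_{i-\frac12}$ inherited from the neighbouring cells $I_{i\pm1}$; and the two endpoint terms $\rho^+_{i-\frac12}\bigl(\omega_1-\tfrac{\Delta t}{h}a^+_{i-\frac12}\bigr)$ and $\rho^-_{i+\frac12}\bigl(\omega_1-\tfrac{\Delta t}{h}a^-_{i+\frac12}\bigr)$.

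Finally I would verify positivity termwise. With $\rho_h^n>0$ at every quadrature node and $a^{\pm}\ge0$, the first three groups are manifestly nonnegative, while the two endpoint groups are nonnegative precisely when $\omega_1\ge\tfrac{\Delta t}{h}\max(a^+,a^-)=\tfrac{\Delta t}{h}|a|$ at each interface; taking this over all interfaces and multiplying through by $h$ reproduces exactly the stated CFL condition $\lambda\le\tfrac{\omega_1}{h}\min_i|\widehat{\partial_x q_h^n}|^{-1}$. Strict positivity of $\bar\rho_i^{n+1}$ then follows since the interior nodal values (or, when $M=2$, the strictly positive endpoint coefficients $\omega_1-\tfrac{\Delta t}{h}a^{\pm}$) contribute strictly. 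The boundary cells $i=1,N$ are handled identically, the zero flux in (B1) simply removing one interface term. I expect the main obstacle to be the algebraic step 2: one must carefully verify that the adaptive coefficient $\beta_{i+\frac12}$ cancels $\{\rho_h^n\}$ to leave exactly $|a|$, producing the upwind splitting that makes the Gauss--Lobatto convex-combination bookkeeping close with the correct constant $\omega_1$.
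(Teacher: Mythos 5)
Your proposal is correct and follows essentially the same route as the paper: test with a constant on each cell, expand the corrected flux $\{\rho_h^n\}\bigl(\widehat{\partial_x q_h^n}+\tfrac12\beta_{i+\frac12}[\rho_h^n]\bigr)$ into the upwind form via $\beta_{i+\frac12}\{\rho_h^n\}=|\widehat{\partial_x q_h^n}|$, decompose $\bar\rho_i^n$ by the Gauss--Lobatto rule, and enforce nonnegativity of the two endpoint coefficients to obtain the stated CFL condition. The only differences are cosmetic (your explicit $a^{\pm}$ notation versus the paper's $\widehat{\partial_x q_h^n}\pm\beta\{\rho_h^n\}$ form) plus your added remarks on the boundary cells and the degenerate branch of $\beta_{i+\frac12}$, which the paper leaves implicit.
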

\begin{proof} 
Take $\xi = 1/h $ in (\ref{fullyPDG}b), we have 
\begin{align*}
    \bar\rho_i^{n+1} =& \bar\rho_i^n + \left.\lambda\{\rho_h^n\}h \left(\widehat{\partial_x q_h^n} + \frac{\beta_{i+\frac12}}{2}[\rho_h^n]\right)\right|_{x_{i+\frac12}}
    -\left.\lambda h \{\rho_h^n\}\left(\widehat{\partial_x q_h^n} + \frac{\beta_{i-\frac12}}{2}[\rho_h^n]\right)\right|_{x_{i-\frac12}}.
\end{align*}  
Note that $\rho^n$ is a polynomial of degree $k$, the Gauss-Lobatto quadrature with $M \geq \frac{k+3}{2}$ is exact for evaluating the cell average $ \bar \rho_i^n$.  
More specifically, we have 
$$
 \bar\rho_i^n=\sum_{m=1}^{M} \omega_m\rho_h^n(\hat x_i^m)
$$
with $\hat x_i^1 = x_{i-\frac12}, \quad \hat x_i^{M} = x_{i+\frac12}$, and $\omega_1=\omega_M$ due to the symmetry of the Gaussian quadrature.
Hence 
    \begin{align*}
    \bar\rho_i^{n+1} =& \sum_{m=1}^{M} \omega_m\rho_h^n(\hat x_i^m) + \frac{\lambda}{2}h \left(\widehat{\partial_xq_h^n} + \beta_{i+\frac12}\{\rho_h^n\}\right)\rho_h^n(x_{i+\frac12}^+)\\
   & \hspace{2.6cm} +\frac{\lambda}{2}h \left(\widehat{\partial_xq_h^n} - \beta_{i+\frac12}\{\rho_h^n\}\right)\rho_h^n(x_{i+\frac12}^-)\\
&\hspace{2.6cm} -\frac{\lambda}{2}h \left(\widehat{\partial_xq_h^n} + \beta_{i-\frac12}\{\rho_h^n\}\right)\rho_h^n(x_{i-\frac12}^+)\\
& \hspace{2.6cm} +\frac{\lambda}{2}h \left(-\widehat{\partial_xq_h^n} + \beta_{i-\frac12}\{\rho_h^n\}\right)\rho_h^n(x_{i-\frac12}^-)\\
=& \sum_{m=2}^{M-1} \omega_m\rho_h^n(\hat x_i^m) 
+ \frac{\lambda}{2}h \left( \widehat{\partial_x q_h^n} + \beta_{i+\frac12}\{\rho_h^n\}\right)\rho_h^n(x_{i+\frac12}^+)\\
&\hspace{2.6cm}+ \frac{\lambda}{2}h \left(-\widehat{\partial_xq_h^n} + \beta_{i-\frac12}\{\rho_h^n\}\right)\rho_h^n(x_{i-\frac12}^-)\\
&\hspace{2.6cm}+ \left[\omega_1 - \frac{\lambda}{2}h (\widehat{\partial_x q_h^n} + \beta_{i-\frac12}\{\rho_h^n\})\right]\rho_h^n(x_{i-\frac12}^+)\\
&\hspace{2.6cm}+ \left[\omega_M - \frac{\lambda}{2}h (-\widehat{\partial_x q_h^n} + \beta_{i+\frac12}\{\rho_h^n\})\right]\rho_h^n(x_{i+\frac12}^-).
\end{align*}
Since the solution values $\rho_h^n(\hat x_i^m)\geq0$ and the weights $\omega_m \geq 0$, the first term is non-negative. The positivity of the second and third terms 
is ensured thank to the choice of $\beta_{i+\frac12}$ in \eqref{defbeta12}, 
\[
    \widehat{\partial_xq_h^n} + \beta_{i+\frac12}\{\rho_h^n\} \geq 0, \quad -\widehat{\partial_xq_h^n} + \beta_{i-\frac12}\{\rho_h^n\} \geq 0.
\]
The positivity of the last two terms is guaranteed  provided
\[
    \lambda \leq \frac{\omega_1}{h }\min_{1\leq i\leq N-1} \left.|\widehat{\partial_x q_h^n}|^{-1}\right|_{x_{i+\frac12}}.
\]
\end{proof}

\begin{rem} We argue that the effects of  $\widetilde{\partial_x q_h}$ on the energy dissipation are under control.  With the local flux correction, the energy dissipation estimate in Theorem \ref{thfullyenergy}
now reads: 
$$
    D_tE^n\leq -\frac{\gamma}{2}\|q_h^n\|_E^2 -\frac12\sum_{i=1}^{N-1} [\rho_h^n][q_h^n]\left.|\widehat{\partial_xq_h^n}|\right|_{x_{i+\frac12}}.
$$
For a convergent DG scheme of $k+1$-th order, one expects to have $[\rho_h]\sim \mathcal{O}\big(h^{k+1}\big)$. This, combined with careful estimates as in the proof of Theorem \ref{semiEnergyDis}, shows that the extra term is bounded from above by: 
$$
C\beta_0h^{k+1}\|q_h^n\|^2_E.
$$
Hence, for sufficiently small $h$ we still have the energy dissipation law  
$$
    D_tE^n\leq -\frac{\gamma}{4}\|q_h^n\|_E^2.
    $$
\end{rem}

\section{The hybrid algorithm with limiting}

\subsection{A positivity-preserving limiter} \label{sec_limiter}
The discrete energy dissipation law requires that $\rho_h(t,x)$ in the semi-discrete setting or $\rho_h^n(x)$ in the fully discrete setting be positive for any $x$. 
We enforce the solution positivity through a positivity-preserving limiter,  based on the one introduced in \cite{ZS10} for scalar hyperbolic conservation laws. As this step is independent of   time $t$, we omit the time dependence and denote  the DG solution as $\rho_h(x)$.
Assuming the cell average 
\[
\bar{\rho}_i= \frac{1}{|I_i|}\int_{I_i}\rho_h(x)\,dx \geq \delta>0 
\]  
for a small parameter $\delta$,  we construct another polynomial $\rho_h^{\delta}(x) \in V_h$ 
using the positivity-preserving limiter
\begin{equation}\label{reconstruct}
\rho_h^{\delta}(x)|_{I_i}:= \bar{\rho}_i+\frac{\bar{\rho}_i-\delta}{\bar{\rho}_i-\min_{I_i} \rho_h(x)} (\rho_h(x)-\bar{\rho}_i)
\quad  \text{ if } \min_{I_i} \rho_h(x)<\delta.
\end{equation}
This modification maintains the cell average $\bar{\rho}_i$ and ensures  
$$
\min_{x\in I_i} \rho^\delta_h(x)\geq\delta.
$$
Moreover, this limiting process preserves the order of accuracy  for suitably small $\delta$.  
For completeness, we recall the following result.
\begin{lem}(\cite{LW17}) If $\bar{\rho}_j>\delta$,  then the reconstruction satisfies the estimate
$$
|\rho_h^{\delta}(x)-\rho_h(x)|\leq C(k) \left( ||\rho_h(x)-\rho(x)||_\infty+ \delta\right),\quad \forall x\in I_j,
$$
where $C(k)$ is a constant depending on $k$.  This implies that the reconstructed $\rho_h^{\delta}(x)$ in \eqref{reconstruct}
    does not destroy the order of accuracy  when $\delta < h ^{k+1}$.
\end{lem}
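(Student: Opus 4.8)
The plan is to reduce the estimate to a single affine-invariant inequality for polynomials of degree $k$, after peeling off the two places where the geometry of the limiter enters. Write $\varepsilon=\|\rho_h-\rho\|_\infty$, $m=\min_{I_j}\rho_h$, and $M=\max_{I_j}\rho_h$. First I would dispose of the trivial case: if $m\geq\delta$ the limiter is inactive and $\rho_h^{\delta}\equiv\rho_h$ on $I_j$, so there is nothing to prove. Thus assume $m<\delta$, in which case the scaling factor $\theta:=\frac{\bar\rho_j-\delta}{\bar\rho_j-m}$ lies in $(0,1)$ because $\bar\rho_j>\delta>m$, and a direct computation from \eqref{reconstruct} gives the pointwise identity
\[
\rho_h^{\delta}(x)-\rho_h(x)=(\theta-1)(\rho_h(x)-\bar\rho_j),\qquad 1-\theta=\frac{\delta-m}{\bar\rho_j-m}.
\]
Hence $|\rho_h^{\delta}(x)-\rho_h(x)|=\frac{\delta-m}{\bar\rho_j-m}\,|\rho_h(x)-\bar\rho_j|$ for every $x\in I_j$, and the task becomes to bound this product.

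Next I would control the two factors separately. For the numerator I use that the exact density is nonnegative: since $\rho\geq 0$ and $|\rho_h-\rho|\leq\varepsilon$, we have $m\geq\min_{I_j}\rho-\varepsilon\geq-\varepsilon$, so $\delta-m\leq\delta+\varepsilon$. For the remaining quotient I claim the key polynomial inequality: there is a constant $C(k)$, depending only on $k$, such that every $p\in P^k(I_j)$ satisfies $\max_{I_j}|p-\bar p|\leq C(k)\,(\bar p-\min_{I_j}p)$, where $\bar p$ denotes the cell average. Granting this with $p=\rho_h$ gives $|\rho_h(x)-\bar\rho_j|\leq C(k)(\bar\rho_j-m)$, whence
\[
|\rho_h^{\delta}(x)-\rho_h(x)|\leq\frac{\delta-m}{\bar\rho_j-m}\,C(k)(\bar\rho_j-m)=C(k)(\delta-m)\leq C(k)(\delta+\varepsilon),
\]
which is exactly the asserted estimate. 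The accuracy claim then follows from the triangle inequality $|\rho_h^{\delta}-\rho|\leq|\rho_h^{\delta}-\rho_h|+|\rho_h-\rho|$ together with $\delta<h^{k+1}$ and the $(k+1)$-th order accuracy of $\rho_h$, so that the limited solution retains the optimal rate.

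The main obstacle is the polynomial inequality, which I would establish by affine invariance and compactness rather than by tracking explicit constants. Both sides transform identically under $p\mapsto\alpha p+\beta$ and under affine reparametrizations of $I_j$, so it suffices to work on a fixed reference interval over the normalized set $\{p\in P^k:\ \bar p=0,\ \max p-\min p=1\}$, which is compact in the finite-dimensional space $P^k$. On this set $\bar p-\min p$ is a continuous, strictly positive functional: it vanishes only for constant $p$, which the normalization excludes, so it attains a positive minimum $c(k)>0$, and one takes $C(k)=1/c(k)$. The subtlety worth flagging is that this argument uses the finite dimension of $P^k$ essentially; the inequality is false for general continuous functions, where a narrow spike can make $\max p-\bar p$ arbitrarily large relative to $\bar p-\min p$, and it is precisely the bounded polynomial degree that forbids arbitrarily narrow spikes and thereby pins down $C(k)$.
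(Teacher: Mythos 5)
Your argument is correct and is essentially the standard proof of this lemma (the paper itself only cites \cite{LW17} and gives no proof): the identity $\rho_h^{\delta}-\rho_h=(\theta-1)(\rho_h-\bar\rho_j)$, the bound $\delta-\min_{I_j}\rho_h\leq\delta+\|\rho_h-\rho\|_\infty$ from $\rho\geq 0$, and the finite-dimensional norm-equivalence bound $\max_{I_j}|p-\bar p|\leq C(k)(\bar p-\min_{I_j}p)$ are exactly the ingredients used there. Your compactness justification of the polynomial inequality (including the observation that it fails outside a fixed-degree polynomial space) is sound, so nothing is missing.
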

We refer to \cite{ZS10} for further details on this type of positivity-preserving limiters.  

\subsection{The hybrid algorithm}
Here we state a hybrid algorithm that preserves the energy dissipation and positivity of $\rho$.
\begin{itemize}
\item Initialization for $\rho_h^0$. Project $\rho_0(x)$ onto $V_h$ to obtain $\rho_h^0$.
\item Time evolution.\\
For $n = 0, 1, 2, \ldots $

\begin{enumerate}
    \item[(i)] Perform the positivity-preserving limiter \eqref{reconstruct} on $\rho_h^n$
    and set $\rho_h^n = \rho_h^{\delta}$ if needed.
        Using $\rho_h^{n}$, solve (\ref{fully_DG}) to obtain $q_h^{n}$ and then $\rho_h^{n+1}$.

    \item[(ii)] Check the positivity of the cell average $\bar\rho_i^{n+1}$.
\begin{enumerate}
\item[] If $\bar\rho_i^{n+1} > 0$, then

\hspace{0.5cm} Continue.
\item[] Else

\hspace{0.5cm} Using $\rho_h^{n}$, solve \eqref{fullyPDG} to obtain $q_h^n$ and then $\rho_h^{n+1}$.
\item[] End If
        \end{enumerate}
\end{enumerate}\mbox{}\\
End For
\end{itemize}

\subsection{SSP time discretization} 
The time discretization in schemes \eqref{fully_DG} and \eqref{fullyPDG} can be implemented using explicit high order Runge-Kutta methods.   
Indeed, general results on forward Euler time discretization can be extended to high order strong-stability-preserving (SSP), also known as total variation diminishing (TVD), Runge-Kutta time discretization \cite{SO88, GST2001}, which is commonly employed in practice to ensure stability and enhance temporal accuracy. 
In our numerical simulation, we use the third order explicit Runge-Kutta (RK3) method for time discretization to solve the ODE system of the form 
$\frac{d \textbf{a}}{dt}=\mathcal{L}(\textbf{a})$:
\begin{align}
	\textbf{a}^{(1)} &= \textbf{a}^n + \Delta t \mathcal{L}(\textbf{a}^n), \nonumber \\
	\textbf{a}^{(2)} &= \frac{3}{4}\textbf{a}^n + \frac{1}{4}\textbf{a}^{(1)} + \frac{1}{4}\Delta t \mathcal{L}(\textbf{a}^{(1)}), \label{eq:RK3} \\
	\textbf{a}^{n+1} &= \frac{1}{3}\textbf{a}^n + \frac{2}{3}\textbf{a}^{(2)} + \frac{2}{3}\Delta t \mathcal{L}(\textbf{a}^{(2)}). \nonumber
\end{align} 
The primary advantage of this method lies in its ability to compute each stage successively, thereby  significantly reducing computational costs and memory overhead.  Additionally, employing DG schemes with time evolution via the strong stability preserving (SSP) Runge–Kutta method ensures that  the free energy does not increase at each time step, provided the time step is suitably small. For a proof of such property with RK2 methods, please refer to \cite{LW17} 

\section{Numerical examples}
In this section, we provide numerical examples to assess the effectiveness of our DG schemes.  Initially, we analyze the order of accuracy through   numerical convergence tests. Subsequently, we  present additional examples to evaluate solution properties such as energy-dissipation and positivity preservation, along with the scheme's capacity to capture steady states. The common parameters used for these numerical examples are specified as follows: 
\[
\beta_0 = 5.434, \qquad \beta_1 = 0.15, \qquad \delta = 10^{-12}.
\]

\noindent{\bf Example 1 (Accuracy test)}
Consider the heat equation with exact solutions available:
\begin{align}
    \left\{\begin{array}{ccll}
        \partial_t\rho & = &\partial_x^2\rho = \partial_x\left(\rho\partial_x\left(H'(\rho)\right)\right), & x\in [-\pi,\pi],\\
        \rho(0, x) &=& 2+\sin(x), &x \in[-\pi,\pi],
    \end{array}\right.
\end{align}
with periodic boundary conditions, where $H(\rho) = \rho(\ln \rho -1 )$.
The exact solution and equilibrium are given by
\[
\rho(t, x) = 2 + e^{-t}\sin x, \qquad \rho_{\infty}(x) = 2.
\]
Table \ref{Eg52} below illustrates the optimal order of accuracy of the energy-satisfying DG method with various polynomial degrees $k$ at final time $t =0.1$.
\begin{table}
\centering
\begin{tabular}{|c|c|c|c|c|c|}
\hline
                     &$N_x$       & $16$      & $32$      & $64$      & $128$ \\\hline
\multirow{2}*{$k=1$} &$L^2$ error & 9.257E-03 & 2.359E-03 & 5.932E-04 & 1.485E-04\\\cline{2-6}
                     &$L^2$ order & -         & 1.972     & 1.992     & 1.998 \\\hline
\multirow{2}*{$k=2$} &$L^2$ error & 3.855E-04 & 4.832E-05 & 6.044E-06 & 7.561E-07\\\cline{2-6}
                     &$L^2$ order & -         & 2.996     & 2.999     & 2.999 \\\hline
\multirow{2}*{$k=3$} &$L^2$ error & 6.821E-04 & 3.032E-05 & 1.451E-06 & 8.407E-08\\\cline{2-6}
                     &$L^2$ order & -         & 4.492     & 4.385     & 4.109 \\\hline
\multirow{2}*{$k=4$} &$L^2$ error & 9.754E-04 & 5.604E-05 & 2.362E-06 & 9.200E-08\\\cline{2-6}
                     &$L^2$ order & -         & 4.121     & 4.568     & 4.682 \\\hline
\end{tabular}
\caption{Accuracy test on the heat equation in {\bf Example 1}.}
\label{Eg52}
\end{table}
Notably, for $k=1, 2, 3$, the numerical solutions converge to the exact solution with optimal accuracy.
However, for $k =4$, there exists a slight degeneracy compared to the optimal order of $5$.
This discrepancy may arise from the inherent smoothness of the solution, where higher-degree polynomial approximations can induce artificial oscillations. Furthermore, the comparable $L^2$ error between $k = 3$ and $4$ suggests that increasing the degree of basis polynomials does not  necessarily enhance the approximation accuracy. \\

\noindent{\bf Example 2 (Porous medium equation) }\\
Consider the model problem defined by: 
\begin{align}
    \left\{\begin{array}{ccll}
        \partial_t\rho & = &\partial_x\left(\rho\partial_x\left(2\rho + \frac{x^2}{2}\right)\right), & x\in [-2,2],\\
        \rho(x,0) &=& \max\{1-|x|, 0\}, &x \in[-2,2],
    \end{array}\right.
\end{align}
with periodic boundary conditions.  This corresponds to (\ref{IVP}a) with $V(x) = \frac{x^2}{2}, H(\rho) = \rho^2, W(x)\equiv 0$.
The corresponding steady state is given by: 
\begin{equation}\label{porousmedium_infty}
        \rho_\infty(x) = \max\left\{\left(\frac{3}{8}\right)^{\frac23} - \frac{x^2}{4}, 0\right\}.
\end{equation}
\begin{figure}
\centering
\begin{subfigure}[b]{0.45\textwidth}
\includegraphics[width=\textwidth]{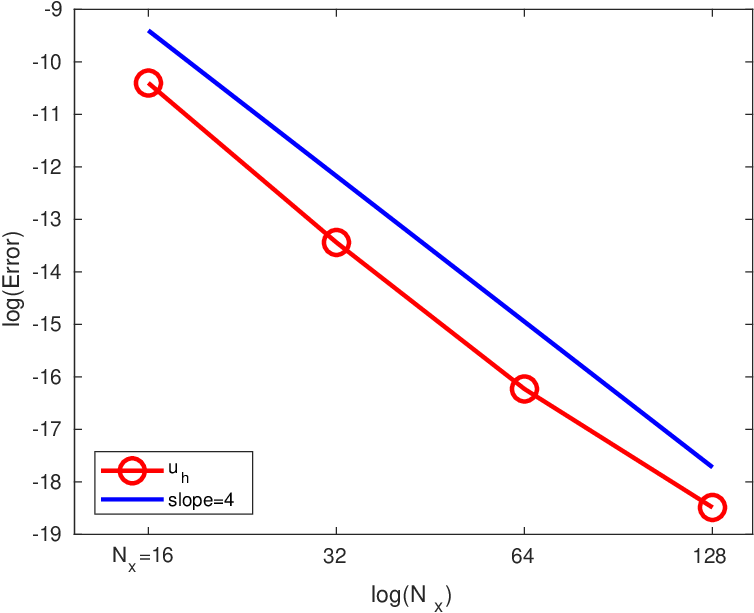}
\caption{$L^2$ error for $k = 3$.}
\label{Eg01error}
\end{subfigure}\quad
\begin{subfigure}[b]{0.45\textwidth}
\includegraphics[width=\textwidth]{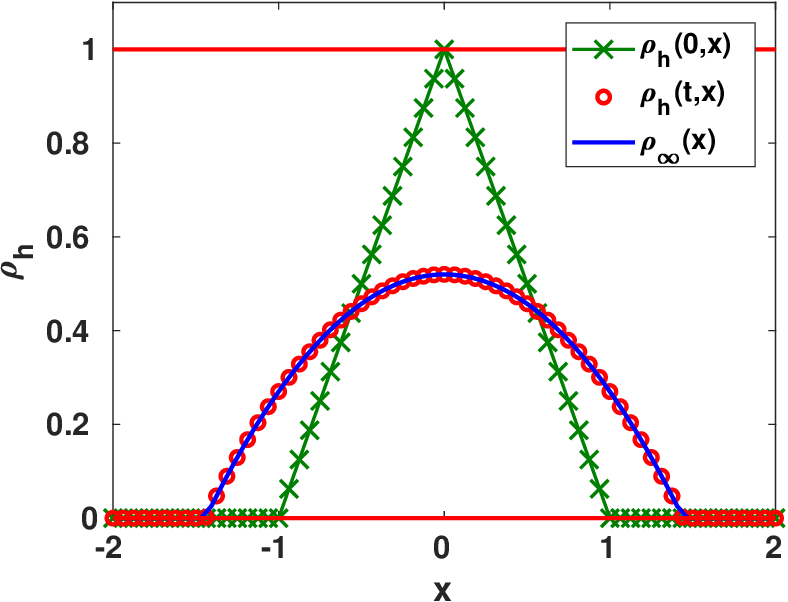}
\caption{$\rho_h$.}
\label{Eg01uh}
\end{subfigure}
\caption{ 
(A) displays the $L^2$ error of the porous media equation in {\bf Example 2} for $k=3$. 
(B) illustrates the numerical solution $\rho_h$ with $N = 64, k=3$ at $t=0$ and $32$, represented by green crosses and red circles, respectively.
The equilibrium $\rho_\infty$ specified in \eqref{porousmedium_infty} is shown by the blue solid curve.
}
\label{figEg01}
\end{figure}
Figure \ref{figEg01}(A) displays the $L^2$ error of the porous medium equation for $k=3$ with different meshes, achieving an optimal accuracy of order $4$.
Figure \ref{figEg01}(B) illustrates the initial data $\rho_h(0,x)$ (green crosses), and the numerical solutions $\rho_h(t=32,x)$ (red circles). 
One can observe that $\rho_h$ quickly converges to the equilibrium $\rho_\infty$ (blue solid curve).\\

\noindent{\bf Example 3 (Attractive-repulsive kernels) }\\
Consider equation (\ref{IVP}) in one dimension with  $H(\rho)=0, V (x) = 0$,  and  the interaction kernel 
$W(x) = \frac{|x|^2}{2}- \ln |x|$, subject to zero flux boundary conditions. 
The corresponding unit-mass steady state is given by
\begin{equation}\label{AR_infty}
\rho_\infty=\frac{1}{\pi}\sqrt{2-x^2}1_{|x|\leq \sqrt{2}},
\end{equation}
and is H\"{o}lder continuous with exponent $\alpha=\frac12$. 
This steady state serves as the unique global minimizer of the free energy $E$, and it is approached by the solutions of (\ref{IVP}) with an exponential convergence rate \cite{CCH14}. 

We compute $\rho_\infty$ by numerically solving (\ref{IVP}) at large time, with the initial condition 
$$
\rho(x, 0)=\frac{1}{\sqrt{2\pi}}e^{-\frac{x^2}{2}}.
$$
Note that $W(x)$ is singular at $x=0$. We prepare the exact convolution of $W(x)$ and the basis polynomial in the neighborhood of the singularity.
\begin{figure}
\centering
\begin{subfigure}[b]{0.30\textwidth}
\includegraphics[width=\textwidth]{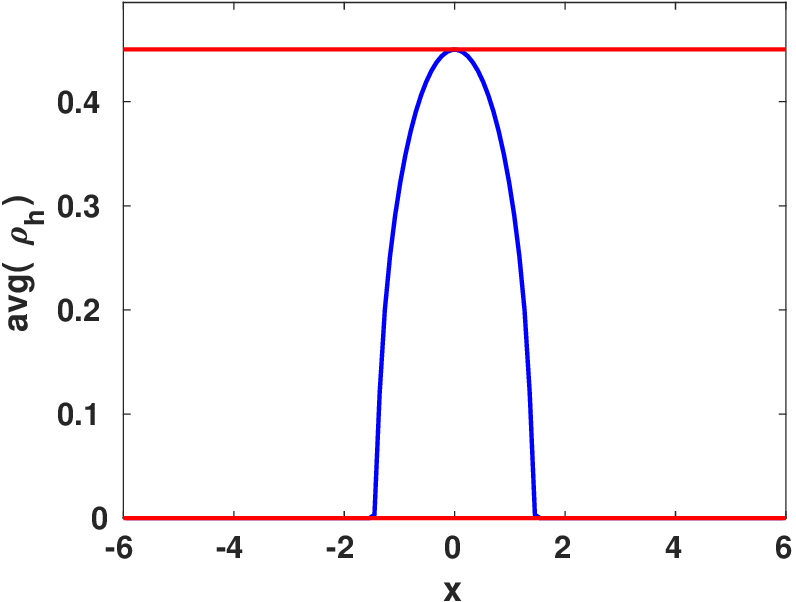}
\caption{The cell average.}
\label{Eg02cell}
\end{subfigure}
\begin{subfigure}[b]{0.30\textwidth}
\includegraphics[width=\textwidth]{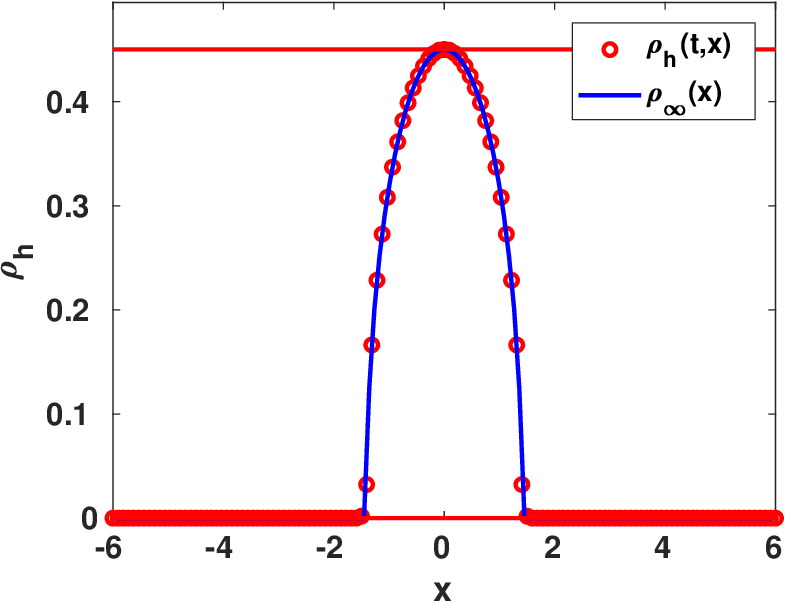}
\caption{$\rho_h(t,x)$ at $t = 10$.}
\label{Eg02uh}
\end{subfigure}
\begin{subfigure}[b]{0.30\textwidth}
\includegraphics[width=\textwidth]{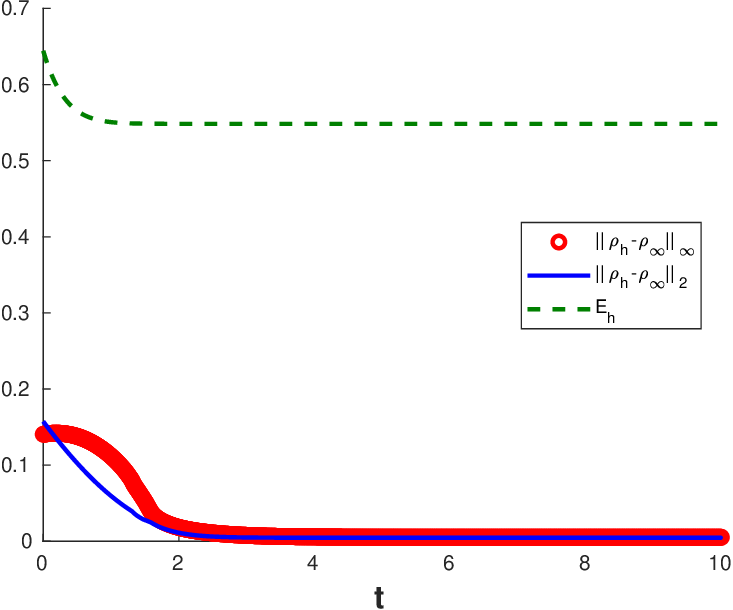}
\caption{$E_h$ and $\|\rho_h-\rho_{\infty}\|$.}
\label{Eg02energy}
\end{subfigure}
\caption{ 
$N= 128, k = 3$.
Consider the final time $t = 10$ in {\bf Example 3}.
(A) displays the cell average.
(B) illustrates  the numerical solution $\rho_h$ with a lift $\delta = 10^{-12}$.
Both preserve positivity perfectly. 
(C) shows the time evolution of the energy $E_h$ and the difference between the numerical solution $\rho_h$ and the equilibrium $\rho_{\infty}$ in $L^2$ and $L^\infty$ norms.
}
\label{figEg02}
\end{figure}
Figure \ref{figEg02}(A) illustrates the cell average of $\rho_h$ at $t=10$ with a solid blue curve.
The lower and upper red lines indicate the boundedness of the solution, which are well preserved by the cell average. Figure \ref{figEg02}(B) displays the numerical solution $\rho_h$ at $t=10$ with red circles. 
The solid blue curve represents the equilibrium $\rho_{\infty}$.
The observation that they overlap shows that the solution $\rho_h$ reaches the steady state at $t = 10$. 
Note that we lift the numerical solution with $\delta = 10^{-12}$ to enforce the positivity of the cell average needed in the positivity-preserving limiter in Sec. \ref{sec_limiter}. The lift is sufficiently small to maintain the high accuracy of the numerical solutions. 
Figure \ref{figEg02}(C) presents the discrete energy $E_h$ with dashed green lines. The energy is observed to be nonincreasing in time, satisfying the energy dissipation inequality in the discrete setting.
Meanwhile, the difference between the numerical solution $\rho_h$ and the equilibrium $\rho_{\infty}$, characterized by the $L^2$ (red circles) and $L^\infty$ (blue solid curve) norms, decreases to zero,  illustrating the convergence to $\rho_\infty$.\\

\noindent{\bf Example 4  (Nonlinear diffusion with compactly supported attraction kernel)}\\
In this example, we examine equation (\ref{IVP}) in one dimension with  $H(\rho)=\frac{\nu}{m}\rho^m$ where $m>1$,  $W(x)=W(|x|)$,  and $V(x)=0$, expressed as: 
\begin{equation}\label{mw}
\partial_t \rho= \partial_x[\rho\partial_x(\nu \rho^{m-1} + W*\rho)].
\end{equation}
This equation arises in various physical and biological models involving competing nonlinear diffusion and nonlocal attraction. Interested readers can find further studies on the equilibria of this model in references \cite{FR2011, BCLR2013, CCH14, SCS18}. 
We consider (\ref{mw}) with $\nu = 0.25, m = 3$, and the compactly supported interaction kernel: 
$$
W(x)= - (1 - |x|)_+.
$$
The equation is subject to zero flux boundary conditions and initial data given by  
\begin{equation}\label{ini_a_2}
\rho_0(x)=\frac{1}{2a}1_{[-a, a]}(x) \qquad \text{ with } a = 2 \text{ and } \Omega = [-6, 6],
\end{equation}
or
\begin{equation}\label{ini_a_3}
\rho_0(x)=\frac{1}{2a}1_{[-a, a]}(x) \qquad \text{ with } a = 3 \text{ and } \Omega = [-6, 6].
\end{equation}
These initial conditions capture the density distribution within the domain.

\begin{figure}
\centering
\begin{subfigure}[b]{0.48\textwidth}
\includegraphics[width=\textwidth]{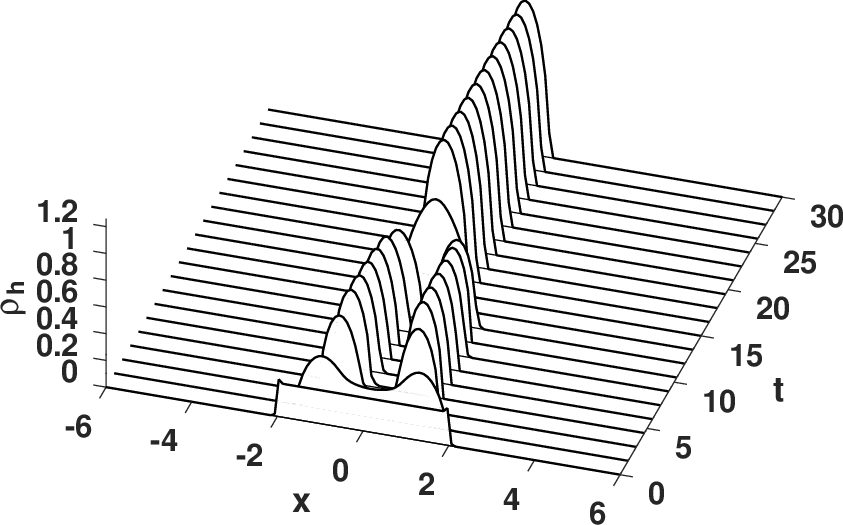}
\caption{The time evolution of $\rho_h$ for \eqref{ini_a_2}.}
\label{Eg06uh_snapshot}
\end{subfigure}\;
\begin{subfigure}[b]{0.48\textwidth}
\includegraphics[width=\textwidth]{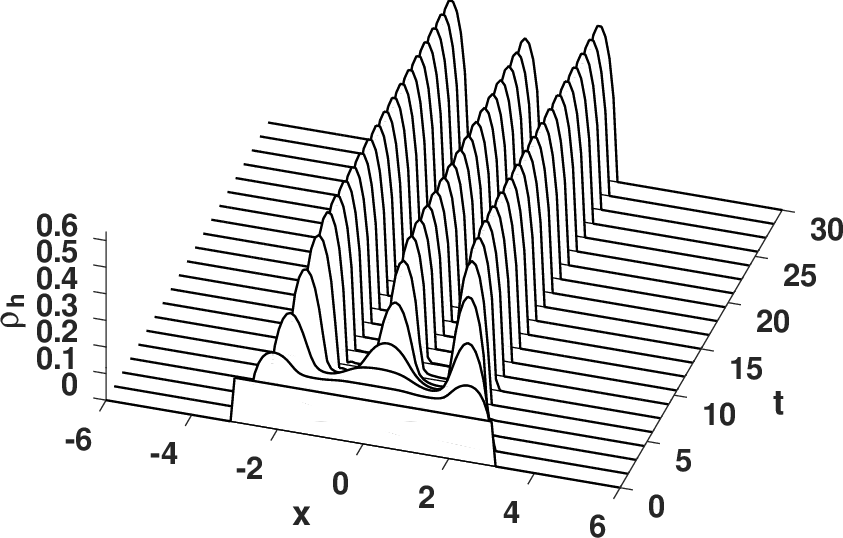}
\caption{The time evolution of $\rho_h$ for \eqref{ini_a_3}.}
\label{Eg07uh_snapshot}
\end{subfigure}\\
\begin{subfigure}[b]{0.48\textwidth}
\includegraphics[width=\textwidth]{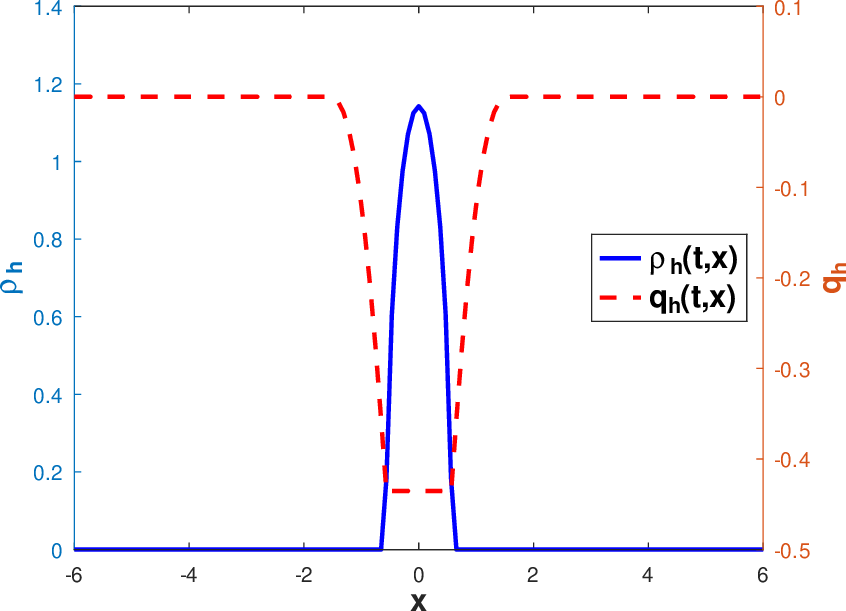}
\caption{$t = 30$ for \eqref{ini_a_2}.}
\label{Eg06uh}
\end{subfigure}\;
\begin{subfigure}[b]{0.48\textwidth}
\includegraphics[width=\textwidth]{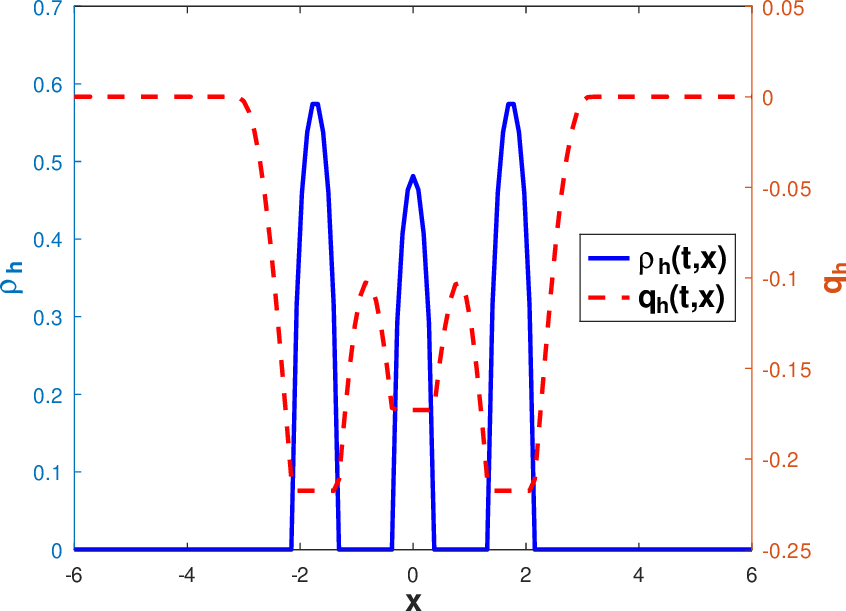}
\caption{$t = 30$ for \eqref{ini_a_3}.}
\label{Eg07uh}
\end{subfigure}
\caption{ 
$N = 128, k = 2$.
Consider the final time $t = 30$ in {\bf Example 4}.}
\label{figEg0607}
\end{figure}
Figure \ref{figEg0607} depicts the results obtained using two initial data sets \eqref{ini_a_2} and \eqref{ini_a_3}.
Figure \ref{figEg0607} (A, C) illustrate the time evolution of $\rho_h$ for $t\in[0,30]$ with solid black curves. Figure \ref{figEg0607} (B, D) present the numerical solution $\rho_h$ and the flux $q_h(t,x)$ at $t=30$ with blue solid curves and red dashed curves, respectively.  Notably, Figure \ref{figEg0607} (D) indicates that $q_h(t,x)$ attains different constants at each bump or connected component of the support of the density function $\rho_h(t,x)$, consistent with the theoretical steady states of equation \eqref{mw}.

Observing the initial data \eqref{ini_a_2}, the numerical solution $\rho_h(t,x)$ in Figure \ref{figEg0607} (A) demonstrates minimal change during the time interval $[5,10]$. 
This behavior aligns with the time evolution of the numerical entropy shown in Fig. \ref{figEhEg0607}.
\begin{figure}
\begin{minipage}[c]{0.6\textwidth}
   \includegraphics[width=\textwidth]{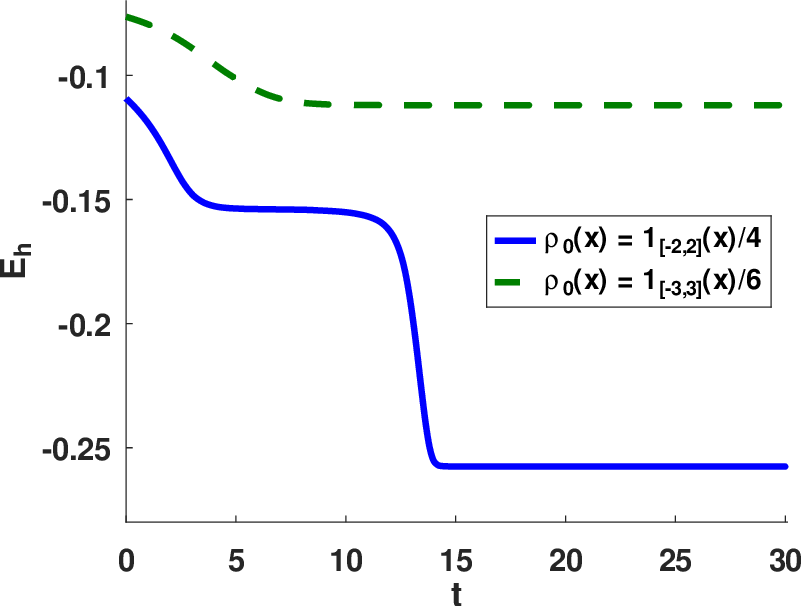}
\end{minipage}\hfill
\begin{minipage}[c]{0.4\textwidth}
\caption{
      $N = 128, k = 2$.
The time evolution of the energy $E_h$ in {\bf Example 4} for two initial data \eqref{ini_a_2} and \eqref{ini_a_3} in green dashed curve and blue solid curve, respectively.
} 
\label{figEhEg0607}  
\end{minipage}
\end{figure}
The blue solid curve in Fig. \ref{figEhEg0607} corresponds to the initial data \eqref{ini_a_2}.
Initially, it drops to a certain value and decreases slowly during the time interval $[5,10]$ until the merging of the bumps happens. Subsequently, it undergoes a significant decrease and reaches its minimum around $t =15$.\\

\noindent{\bf Example 5 (Nonlinear diffusion with Gaussian-type attraction kernel)}\\
Consider the model problem \eqref{IVP} with $V\equiv 0$ and $W(x)= - e^{-\frac{|x|^2}{4}}$.
We begin with the initial data:  
\begin{equation}
\rho_0(x) = \frac{1}{5}1_{[-5, -4]\cup [-2,1]\cup[3,4]}(x) \qquad \text{ for } x \in\Omega = [-8. 8].
\end{equation}
Figure \ref{Eg0813T800}(A) illustrates the time evolution of the numerical solution.  The density function $\rho_h(t,x)$ forms two bumps, which gradually interact with each other. Around $t = 400$, they merge again and concentrate into a single bump on the right- hand side of the domain.
Figure \ref{Eg0813T800}(B) shows the decay of the corresponding energy $E_h$, which eventually stabilizes. 
Figure \ref{Eg0813T800}(C-D) display the numerical solution $\rho_h(t,x)$ in blue solid curves and the flux $q_h(t,x)$  in red dashed curves at $t=0, 600$. 
Its evident that the density function ultimately  concentrates into a single bump, while the values of the flux $q_h(t,x)$ (in red dashed curve) remain constant. Particularly, Figure \ref{Eg0813T800}(D) provides a zoomed-in view of the numerical flux $q_h(t,x)$ within the circled spacial domain $[0.2,1.4]$, clearly demonstrating its constancy over time.\\

\begin{figure}
\centering
\begin{subfigure}[b]{0.50\textwidth}
\includegraphics[width=\textwidth]{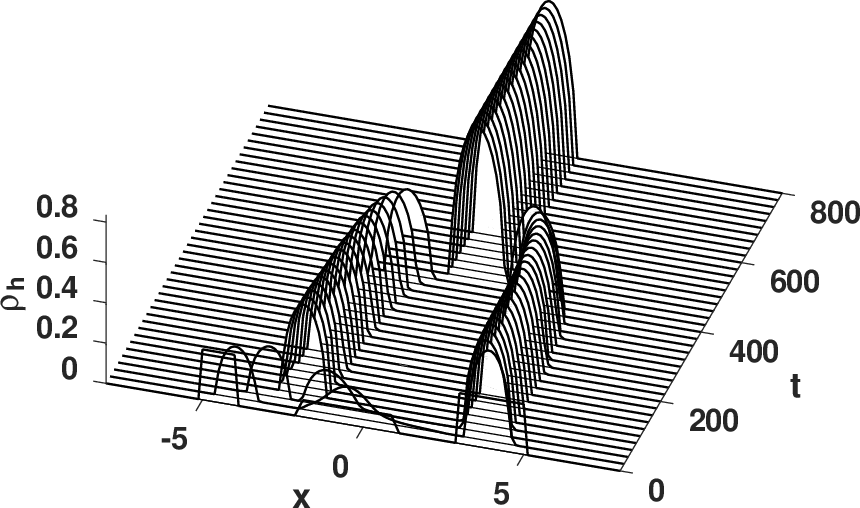}
\caption{$t \in[0,600]$.}
\end{subfigure}
\begin{subfigure}[b]{0.40\textwidth}
\includegraphics[width=\textwidth]{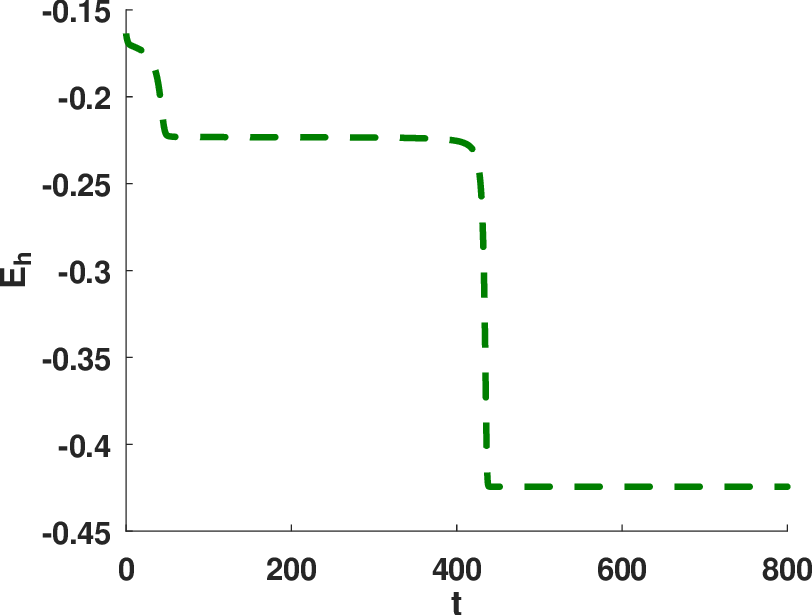}
\caption{$t \in [0, 600]$.}
\end{subfigure}\\
\begin{subfigure}[b]{0.45\textwidth}
\includegraphics[width=\textwidth]{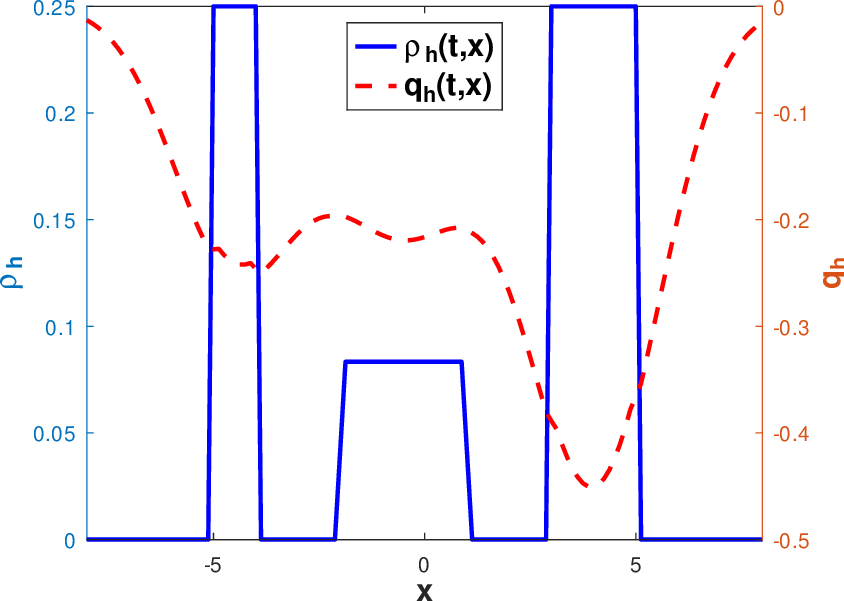}
\caption{$t = 0$.}
\end{subfigure}
\begin{subfigure}[b]{0.45\textwidth}
\includegraphics[width=\textwidth]{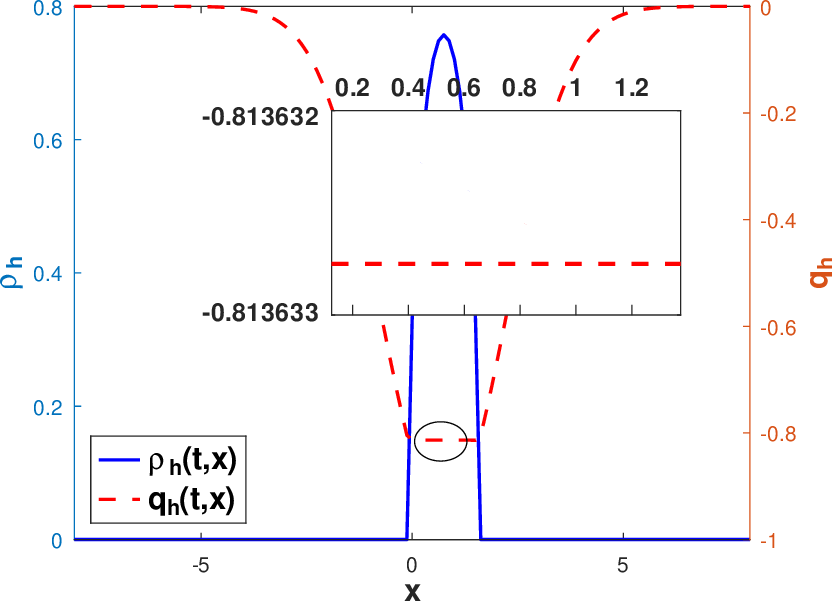}
\caption{$t = 600$.}
\end{subfigure}
\caption{ Consider 
$N = 128$ and $k = 2$ for the final time $t = 600$ in {\bf Example 5}.  The numerical solution $\rho_h$ and the flux $q_h(t,x)$ are depicted by the blue solid curve and red dashed curve, respectively.
Figure (D) provides a zoomed-in view of the numerical flux $q_h(t,x)$ within the circled spacial domain $[0.2,1.4]$. It is noticeable that the flux reaches a constant value.
}
\label{Eg0813T800}
\end{figure}

It seems that the energy $E_h(t)$ undergoes minimal changes at $t=100, 250, 500$, as shown in Figure \ref{Eg0813T800}(B).
However, a closer examination of the numerical flux
suggests that the numerical solution is significantly distant from equilibrium.
Figure \ref{Eg0813T800comparison} displays the numerical solutions $\rho_h(t,x)$ and the numerical flux $q_h(t,x)$ at $t = 100, 250, 500, 800$. Within each figure, there is a zoomed-in view of the numerical flux $q_h(t,x)$ within the circled spatial domain surrounding mass concentration.
At $t=100, 250, 500$ in Figure \ref{Eg0813T800comparison}(A-C), the flux exhibits  non-constant behavior. Figure \ref{Eg0813T800comparison}(D), along with Figure \ref{Eg0813T800}(D), illustrates that the system reaches equilibrium before $t=600$ and maintains stability over the long time.

\begin{figure}
\centering
\begin{subfigure}[b]{0.45\textwidth}
\includegraphics[width=\textwidth]{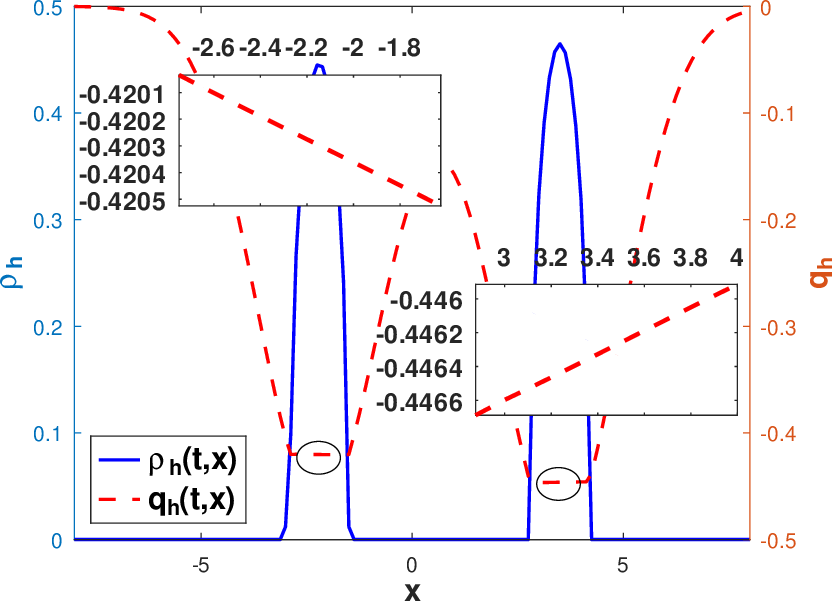}
\caption{$t = 100$.}
\end{subfigure}
\begin{subfigure}[b]{0.45\textwidth}
\includegraphics[width=\textwidth]{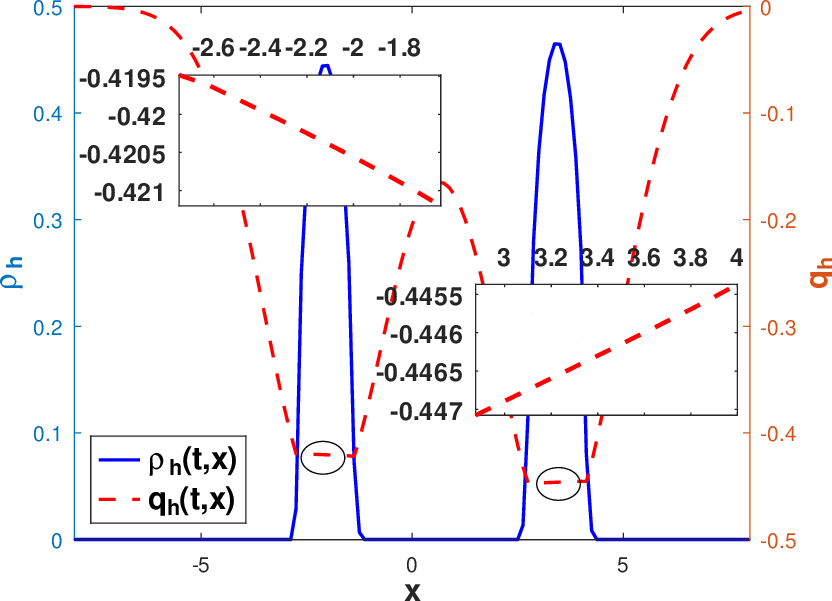}
\caption{$t = 250$.}
\end{subfigure}\\
\begin{subfigure}[b]{0.45\textwidth}
\includegraphics[width=\textwidth]{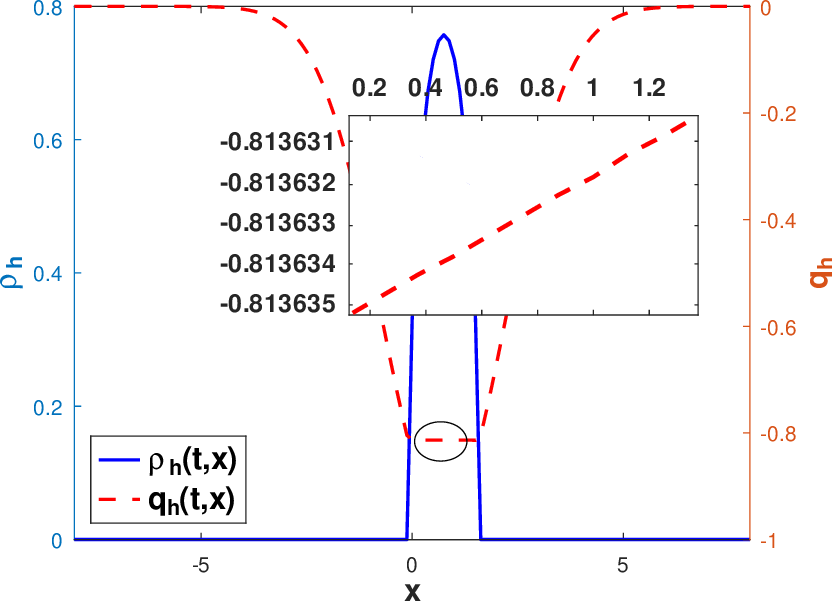}
\caption{$t = 500$.}
\end{subfigure}
\begin{subfigure}[b]{0.45\textwidth}
\includegraphics[width=\textwidth]{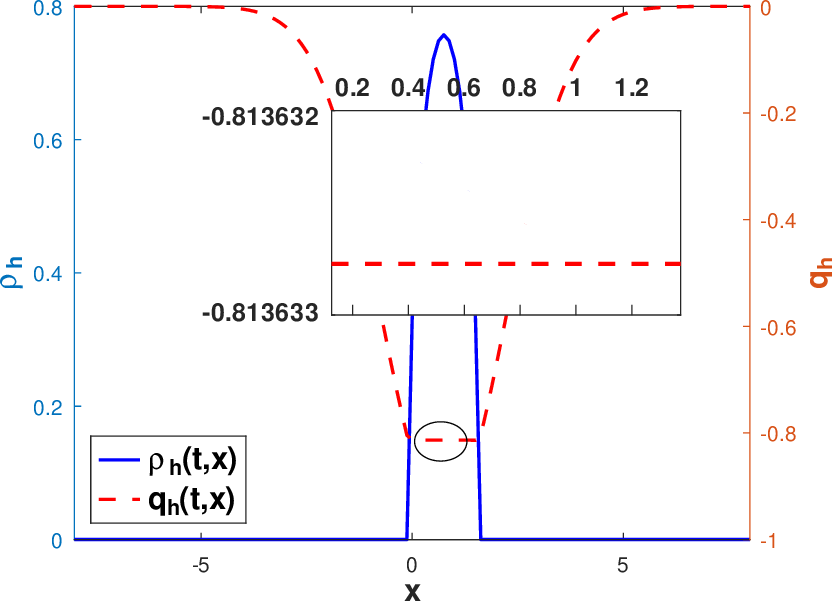}
\caption{$t = 800$.}
\end{subfigure}
\caption{ 
For $N = 128, k = 2$ in {\bf Example 5}, we consider 
the final time $t = 800$.  The numerical solution $\rho_h$ and the flux $q_h(t,x)$ are plotted in the blue solid curve and red dashed curve, respectively.
The zoomed-in view of the numerical flux $q_h(t,x)$ within the circled spatial domain surrounding mass concentration reveals its non-constant behavior at $t=100, 250, 500$. However, after reaching  equilibrium, the flux becomes constant in the neighborhood of the density bump,  as shown in Figure (D). 
}
\label{Eg0813T800comparison}
\end{figure}

\mbox{}\\

\noindent{\bf Example 6  (Nonlinear diffusion with asymmetric double well potential in 1D)}\\
Consider the porous medium nonlinear diffusion with $H(\rho)=\rho^2$ and $V(x)=x^4+0.4 x^3-5x^2$. 
Note that $V(x)$ represents an asymmetric double well potential, illustrated in Fig. \ref{figEg0910} (A).
Two different initial data sets 
\begin{equation}\label{Gaussian_0}
\rho_0(x) = \frac{1}{\sqrt{2\pi}}e^{-\frac{x^2}{2}}, \qquad x\in \Omega = [-4,4],   
\end{equation}
and
\begin{equation}\label{Gaussion_1_5}
\rho_0(x) = \frac{1}{\sqrt{2\pi}}e^{-\frac{(x-1.5)^2}{2}}, \qquad x\in \Omega = [-4,4] 
\end{equation}
are simulated for $t \in [0,10]$. 
Both initial distributions have a mass around $1$, yet their time evolution varies significantly.
The symmetric initial data \eqref{Gaussian_0} results in a mass concentration consistent with the two local minima of the double well potential $V(x)$.
Conversely, the asymmetric initial data \eqref{Gaussion_1_5} reaches an equilibrium state with a much greater mass on the same side as the initial data.\\

\begin{figure}
\centering
\begin{subfigure}[b]{0.30\textwidth}
\includegraphics[width=\textwidth]{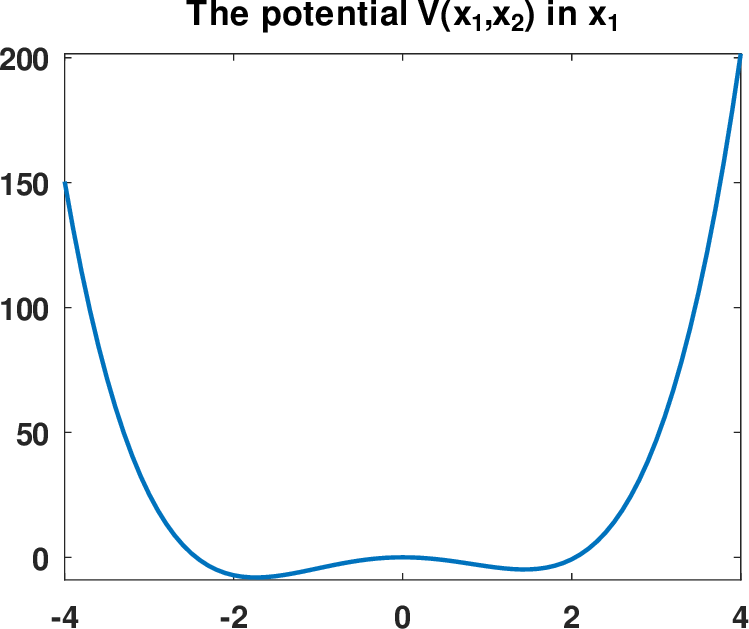}
\caption{$V(x)$.}
\label{Eg0910Vpotential1D}
\end{subfigure}
\begin{subfigure}[b]{0.30\textwidth}
\includegraphics[width=\textwidth]{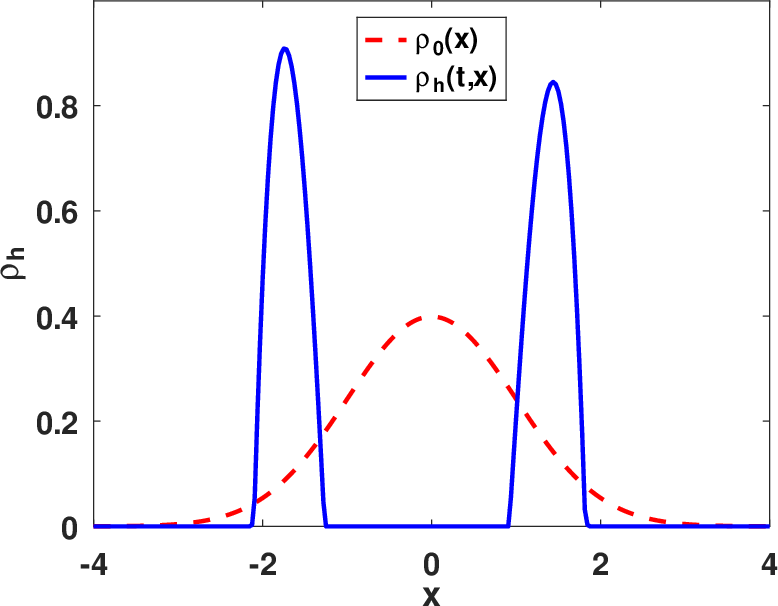}
\caption{$\rho_h(t,x)$ at $t = 0, 10$.}
\label{Eg09uhqh}
\end{subfigure}
\begin{subfigure}[b]{0.30\textwidth}
\includegraphics[width=\textwidth]{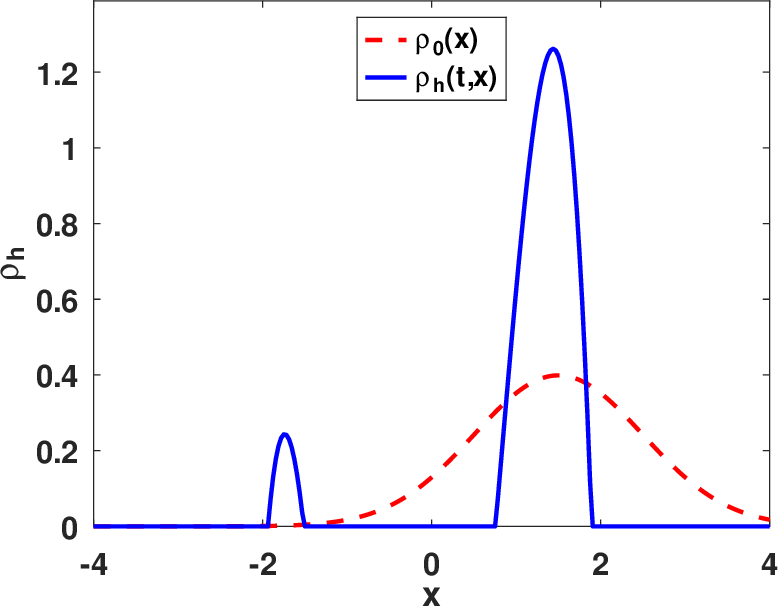}
\caption{$\rho_h(t,x)$ at $t = 0, 10$.}
\label{Eg10uhqh}
\end{subfigure}
\caption{ 
For {\bf Example 6} with $N = 256, k = 2$, consider 
the final time $t = 10$.
(A) displays the asymmetric double-well potential $V(x)$. 
(B) illustrates the numerical solution $\rho_h$ with the symmetric initial data \eqref{Gaussian_0}.
(C) showcases the numerical solution $\rho_h$ with the asymmetric initial data \eqref{Gaussion_1_5}.
In both (B) and (C), the initial data $\rho_0(x)$ are plotted with red dashed curves, and the numerical solution $\rho_h(t=60,x)$ are with blue solid curves.
}
\label{figEg0910}
\end{figure}
Fig. \ref{Eg0910uhqh}(A-B) show the numerical density function $\rho_h(t,x)$ in blue solid curves and the flux $q_h(t,x)$ in red dashed curves at $t=0, 10$ for the initial data \eqref{Gaussian_0}.
Fig. \ref{Eg0910uhqh}(C-D) illustrate the same for the initial data \eqref{Gaussion_1_5}.
Due to the large magnitude of the double-well potential $V(x)$ and the small support of $\rho_h(t,x)$, we zoom in and plot the numerical density function $\rho_h(t,x)$ and flux $q_h(t,x)$ in the domain $[-2.5, 2.5]$. It's observable that the flux $q_h(x)$ remains  constant at the aggregation location.
\begin{figure}
\centering
\begin{subfigure}[b]{0.45\textwidth}
\includegraphics[width=\textwidth]{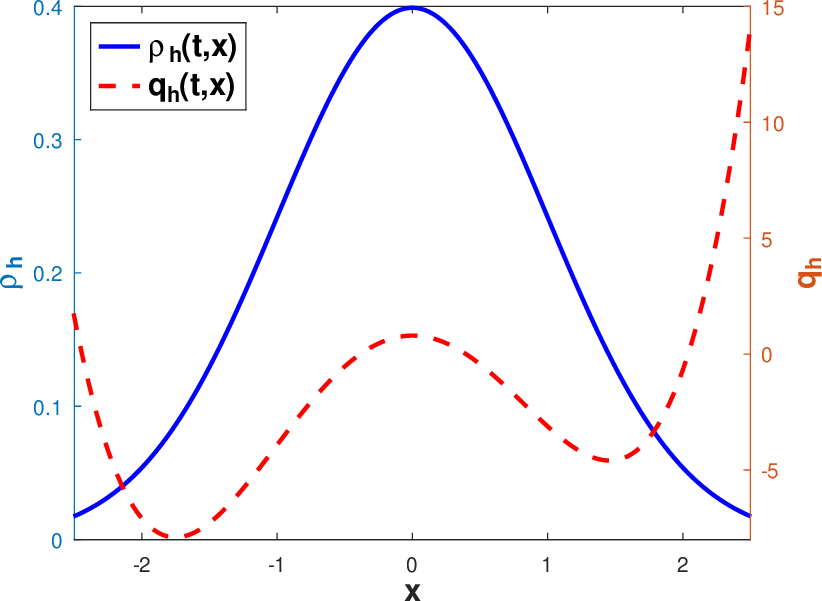}
\caption{$t = 0$.}
\end{subfigure}
\begin{subfigure}[b]{0.45\textwidth}
\includegraphics[width=\textwidth]{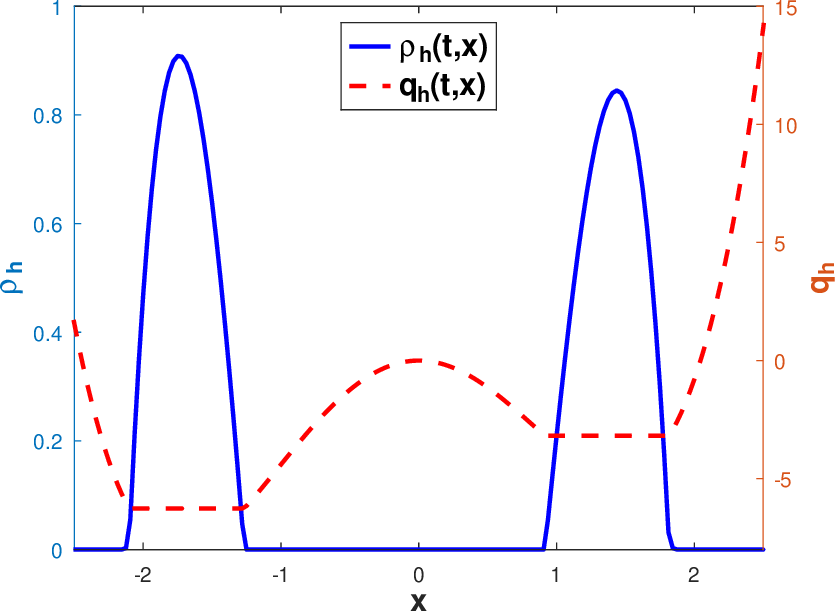}
\caption{$t = 10$.}
\end{subfigure}\\
\begin{subfigure}[b]{0.45\textwidth}
\includegraphics[width=\textwidth]{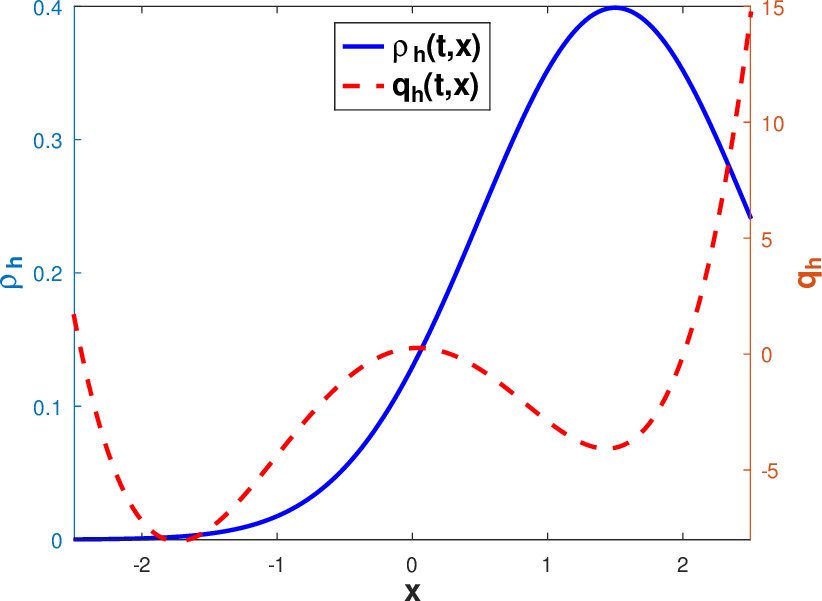}
\caption{$t = 0$.}
\end{subfigure}
\begin{subfigure}[b]{0.45\textwidth}
\includegraphics[width=\textwidth]{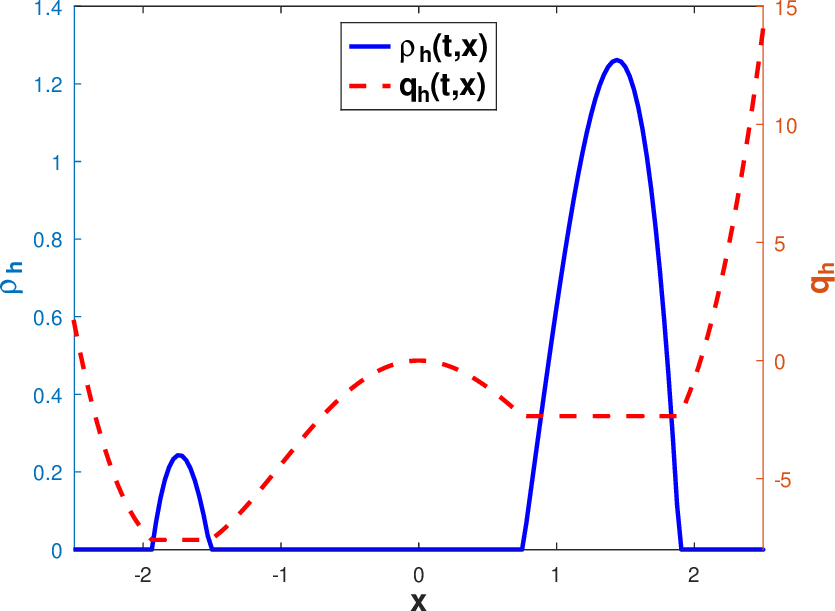}
\caption{$t = 10$.}
\end{subfigure}
\caption{ 
For {\bf Example 6} with $N = 256, k = 2$, consider 
the final time $t = 10$. The numerical solution $\rho_h$ and the flux $q_h(t,x)$ are plotted in blue solid curves and red dashed curves, respectively.
(A-B) correspond to the initial data \eqref{Gaussian_0}.
(C-D) correspond to the initial data \eqref{Gaussion_1_5}.
}
\label{Eg0910uhqh}
\end{figure}

\section{Concluding remarks}
In this paper, we present an arbitrarily  high order DG method  designed to solve a class of nonlinear Fokker-Planck equations exhibiting  a gradient flow structure. These equations adhere to a free energy dissipation law and are 
characterized by non-negative solutions. By applying the Direct Discontinuous Galerkin (DDG) method to the reformulated system with energy flux evaluated through projection, our DG scheme satisfies a discrete energy dissipation law. Additionally, through the incorporation of a local flux correction, we demonstrate the propagation of positivity of cell averages over time. Consequently, we introduce a hybrid algorithm aimed at preserving the non-negativity of the numerical density while conserving mass and maintaining numerical steady states. We provide numerical examples to illustrate the excellent performance of the proposed DG schemes.

It is worth noting that our DG schemes are readily extendable to high dimensions. However, our preliminary numerical tests suggest that even in the two-dimensional case, computing the nonlocal interaction can incur significant computational costs. This challenge partly arises from the time step restriction, as implicitly indicated in Theorem 3.1 and 3.2, which imposes a limitation of size  $ \mathcal{O}(h^2)$. This limitaiton is evidently a drawback of explicit time discretization.  In future endeavors,  we intend to extend our DG spatial discretization to multi-dimensional problems and investigate more efficient time discretization methods to preserve the positivity of numerical solutions. This may involve coupling with other limiting techniques such as the KKT (Karush-Kuhn-Tucker) limiting approach alongside implicit time discretization \cite{VXX19}.   

\section*{Acknowledgments} JAC was supported by the Advanced Grant Nonlocal-CPD (Nonlocal PDEs for Complex Particle Dynamics: Phase Transitions, Patterns and Synchronization) of the European Research Council Executive Agency (ERC) under the European Union’s Horizon 2020 research and innovation programme (grant agreement No. 883363). JAC was also partially supported by the EPSRC grants numbers EP/V051121/1 and EP/T022132/1. HL was partially supported by the National Science Foundation under Grant DMS1812666. HY was supported under Grants NSFC 12271288 and NSFC 11971258.

\bigskip

\bibliographystyle{abbrv} 
\bibliography{references}

 \end{document}